\documentclass[12pt]{amsart}
\usepackage[margin=2cm]{geometry}
\usepackage{latexsym}
\usepackage{amsmath}
\usepackage{amsfonts}
\usepackage{amssymb}
\usepackage[all]{xy}
\usepackage{pgf,tikz}
\usetikzlibrary{arrows}

\newtheorem{theorem}{Theorem}[section]
\newtheorem{lemma}[theorem]{Lemma}

\newtheorem{conj}[theorem]{Conjecture}

\theoremstyle{definition}
\newtheorem{definition}[theorem]{Definition}

\theoremstyle{remark}

\numberwithin{equation}{section}

\begin{document}
\definecolor{zzttqq}{rgb}{0.6,0.2,0}
\definecolor{qqqqff}{rgb}{0,0,1}
\definecolor{darkspringgreen}{rgb}{0.09, 0.45, 0.27}
\title[MGS for the n-torus]{Maximal Green Sequences for Cluster Algebras Associated to the $n$-Torus with arbitrary punctures}

\author[Bucher]{Eric Bucher}
\address{Mathematics Department\\
Louisiana State University\\
Baton Rouge, Louisiana}
\email{ebuche2@tigers.lsu.edu}
\author[Mills]{Matthew R. Mills}
\address{Department of Mathematics, Wayne State University, Detroit, MI 48202}
\email{matthew.mills2@wayne.edu}

\thanks{{MM is supported by the GAANN Fellowship.}}
\thanks{{EB is supported by the GAANN Fellowship.}}
\subjclass{13F60, 30F99, 32G15}
\date{March, 2015}

\begin{abstract}
It is well known that any triangulation of a  marked surface produces a quiver. In this paper we will provide a triangulation for orientable surfaces of genus $n$ with an arbitrary number interior marked points (called punctures) whose corresponding quiver has a maximal green sequence. 

\end{abstract}

\maketitle

\section{Introduction}
\label{intro}

Cluster algebras were introduced by Fomin and Zelevinsky in \cite{cluster1}. Within a very short period of time cluster algebras became an important tool in the study of phenomena in various areas of mathematics and mathematical physics. They play an important role in the study of Teichm\"{u}ller theory, canonical bases, total positivity, Poisson Lie-groups, Calabi-Yau algebras, noncommutative Donaldson-Thomas invariants, scattering amplitudes,  and representations of finite dimensional algebras. For more information on the diverse scope of cluster algebras see the review paper by Williams \cite{williams}. 

A question that has arose that carries with it some ramifications is whether a quiver associated to a cluster algebra has a maximal green sequence. The idea of maximal green sequences of cluster mutations was introduced by Keller in \cite{keller}. He explored important applications of this notion, by utilizing it in the explicit computation of noncommutative Donaldson-Thomas invariants of triangulated categories which were introduced by Kontsevich and Soibelman in \cite{kontsevich}. If a quiver with potential has a maximal green sequence, then its associated Jacobi algebra is finite dimensional, this fact follows from Theorem 5.4 \cite{keller2} and Theorem 8.1 \cite{brustle}. In the work of Amoit \cite{amoit} there is given an explicit construction of a cluster category given a quiver with potential whose Jacobian algebra is finite dimensional. This means that finding a maximal green sequence for a given quiver allows us to categorify the associated cluster algebra. Muller showed that in fact the existence of a maximal green sequence corresponding to a quiver is not a mutation invariant \cite{muller}, which makes it all the more difficult to show that a maximal green sequence exists. You must make a strategic choice of initial seed (or initial quiver) and then find the sequence. The iterative nature mutations means that exhaustive methods are not always effective when searching for a maximal green sequence. There has been some progress made, and in the final section of this paper we will discuss which cluster algebras are known to have or not have associated maximal green sequences.
\\
\begin{table}
\begin{tabular}{|c|c|c|}
\hline
Surface Classification & Exists a MGS &Reference \\ 
\hline
Surface with boundary & Yes & Alim et al., \cite{alim} \\
Closed surface with $p=1$ & No & S. Ladkani, \cite{ladkani}\\
Sphere with $p\geq 4$ & Yes &  Alim et al., \cite{alim}\\
Closed surface with $g=1$ and $p \geq 2$ & Yes & Alim et al. \cite{alim}\\
Closed surface with $g\geq 2$ and  $p = 2$ & Yes & Bucher, \cite{bucher}\\
Closed surface with $g\geq 2$ and  $p\geq 3$ & Yes & Bucher and Mills\\
\hline
\end{tabular}
\label{references}
\vspace{3pt}
\caption{The different cases of surfaces, and whether or not they have a maximal green sequence. The letter $g$ denotes the genus of the surface, and $p$ denotes the number of punctures. By "closed surface" we mean a surface with no boundary component.}
\end{table}
The main result in this paper focuses on cluster algebras that are associated to triangulations of surfaces. This association is introduced by Gekhtman, Shapiro, and Vainshtein in \cite{gekhtman} and in a more general setting by Fock and Goncharov in \cite{fock}. This construction is extremely important because all but a few exceptional cases of cluster algebras of finite mutation type can be realized as a cluster algebra which arises from a surface following this construction. For a more in depth look into the procedure of creating a cluster algebra from a triangulated surface see the work by Fomin, Shapiro, and Thurston \cite{fomin}. 

Previous work by the first author in \cite{bucher} constructs maximal green sequences for cluster algebras which arise from surfaces with no empty boundary component and two punctures. In this paper we will push this result further, and construct maximal green sequences for cluster algebras arising from orientable surfaces with arbitrary genus and three or more punctures. We have provided references for all known cases in Table \ref{references}. This brings us to the statement of the main theorem for our paper. 
\begin{theorem}\label{main_theorem}
For a surface with no boundary, genus at least two, and with three or more punctures there exists a triangulation whose associated quiver has a maximal green sequence. 
\end{theorem}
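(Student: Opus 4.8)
The plan is to exhibit one carefully chosen triangulation rather than to argue mutation-invariantly, since Muller's example shows that the existence of a maximal green sequence can be destroyed under mutation. I would build the triangulation from the standard $4g$-gon model of the closed genus $g$ surface, first fixing a triangulation of the twice-punctured genus $g$ surface, whose quiver already carries a maximal green sequence by Bucher \cite{bucher}, and then inserting the remaining $p-2$ punctures in a uniform local pattern. The guiding principle is to arrange the triangulation so that the associated quiver $Q$ decomposes into a ``core'' coming from the handles together with $p-2$ nearly identical ``puncture blocks,'' each attached to the core along a single triangle.

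The technical heart is to describe the local change to the quiver when a puncture is added. Placing a new puncture $P$ inside an existing triangle with marked vertices $a,b,c$ and connecting $P$ to those three points subdivides the triangle into three triangles $Pab$, $Pbc$, $Pca$ and introduces three new arcs $Pa,Pb,Pc$; reading off the cyclic arrow convention inside each of these triangles shows that $Pa,Pb,Pc$ form an oriented $3$-cycle in $Q$ and attach to the old arcs $ab,bc,ca$ by a predictable pattern of arrows. Thus passing from $p$ to $p+1$ punctures amounts, at the level of quivers, to gluing in a fixed gadget (this $3$-cycle together with its connecting arrows) along one triangle. I would then set up an induction on $p$ whose inductive step is a gluing lemma: if $Q$ admits a maximal green sequence and $Q^{+}$ is obtained from $Q$ by inserting this puncture gadget, then $Q^{+}$ admits one as well.

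To prove the gluing lemma I would construct the sequence for $Q^{+}$ explicitly by concatenation. First mutate at the three gadget vertices in an order chosen so that each such mutation is green and so that, afterward, the effect on the rest of the quiver is reduced to a controlled (ideally trivial) modification; then run the inductive maximal green sequence for the core $Q$; and finally clean up the gadget vertices with a short terminal sequence that turns them red. This follows the layered-concatenation philosophy used for type $A$ quivers and in the earlier surface cases, where a maximal green sequence for the whole is assembled from sequences for the pieces provided the connecting arrows point compatibly.

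The main obstacle I anticipate is precisely keeping every mutation green across the arrows joining a puncture block to the core: those connecting arrows can reactivate vertices of the core and spoil a naive concatenation. Controlling this requires tracking the $c$-vectors (equivalently the green/red status) through the three gadget mutations and verifying that afterward the core quiver, together with its framing, has returned to a configuration isomorphic to the one demanded by the inductive hypothesis. The hypothesis $g\ge 2$ enters only through the shape of the core, so the bulk of the work is a careful but essentially local analysis of a single puncture gadget, together with a bookkeeping argument showing that the base case and the inductive step fit together cleanly.
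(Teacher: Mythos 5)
Your proposal diverges from the paper both in the triangulation (you cone each new puncture into a triangle of the core, producing isolated $3$-cycle gadgets; the paper instead places all $p-2$ punctures inside a single digon obtained by deleting the arc $f_n$, producing one ladder-shaped subquiver $P^{p-3}$) and in the proof strategy (a per-puncture gluing induction versus one globally designed mutation sequence). The difference in strategy is where the genuine gap lies: your entire inductive step rests on the gluing lemma, and that lemma is exactly the hard content of the theorem, which you do not prove. Concretely, the sandwich structure you propose --- mutate the three gadget vertices greenly, run the core's maximal green sequence unchanged, then clean up the gadget --- breaks at both seams. Mutating the gadget vertices first creates and destroys arrows among the core arcs $ab$, $bc$, $ca$ (via $2$-paths through the gadget) and creates arrows between those core vertices and the frozen copies of the gadget vertices, so the framed quiver sitting on the core vertices is no longer $\hat{Q}$; the inductive maximal green sequence for $Q$ therefore cannot be invoked as a black box, and there is no argument that it remains a green sequence for the perturbed configuration. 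Symmetrically, every core mutation at a vertex adjacent to the gadget can flip gadget vertices back to green, so the terminal ``cleanup'' is not a short local sequence but must be coordinated with the whole history. Your closing sentence --- verify that ``the core, together with its framing, has returned to a configuration isomorphic to the one demanded by the inductive hypothesis'' --- is precisely the claim that needs proof, and no mechanism is offered for it.

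The paper's own proof is evidence that this clean modularity does not materialize. Its maximal green sequence for $Q_n^p$,
$$\gamma(f_{n+2}f_{n+1}\cdots f_1)\,\sigma_n\cdots\sigma_1\,\alpha_0\alpha_1\cdots\alpha_{p-3}\,\gamma(f_{n+2},f_2,f_1,f_{3},\ldots,f_{n})\,f_{n+1}\,\beta_{p-3}\,\tau_1\cdots\tau_n,$$
interleaves the puncture-part sequences ($\alpha$'s and $\beta$'s) \emph{inside} the core sequence inherited from the twice-punctured case, rather than wrapping the core between them; moreover the individual sequences $\alpha_j$ and $\beta_j$ mutate some vertices twice and reach across many puncture rows (e.g.\ $\alpha_j$ for $j\geq 3$ descends all the way to $g_0$), so the contribution of one puncture is not even a sequence supported on that puncture's three arcs. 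The induction in the paper is likewise not a gluing induction on the full quiver: it is an induction on the rows of the isolated ladder quiver $P_n$ (Lemma \ref{tail}), carried out with explicit quiver computations, and only afterwards is the ladder sequence spliced into the core sequence at the two points where greenness can be verified. To rescue your approach you would need to either prove your gluing lemma (likely false in the naive form, since the gadget is attached to the core by arrows in both directions, so this is not a triangular extension where such concatenation results hold) or redesign it with the kind of interleaving and repeated mutation the paper uses --- at which point the locality that motivated your triangulation is lost.
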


The proof of Theorem \ref{main_theorem} is given in section \ref{proof_section}, but we will give a sketch of the proof here. We start with an $n$-torus and then construct a specific triangulation of this surface for which we can find a maximal green sequence for the associated cluster algebra. This triangulation is a modification of the triangulation that was used in \cite{bucher}, and will be given in section 3. After constructing the triangulation, we look at its associated quiver. We take advantage of the symmetry of this quiver by breaking it into smaller parts, finding maximal green sequences for these parts, and then carefully putting these sequences together. By adding interior punctures to the structure we add a ladder structure to our quiver. Increasing the number of punctures lengthens the ladder but its connection to the rest of the quiver is unaffected. We then induct on the number of punctures to finish the proof. 

As an immediate corollary to our result we also have the following result. 
\begin{theorem}\label{main2}
For every surface, it is known whether or not there exists a triangulation whose associated quiver has a maximal green sequence. 
\end{theorem}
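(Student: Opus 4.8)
The plan is to prove Theorem \ref{main2} as a completeness argument: I would show that every marked surface giving rise to a cluster algebra falls into exactly one of the rows of Table \ref{references}, so that the existence of a maximal green sequence is settled in each case once Theorem \ref{main_theorem} is available. There is no new hard analysis to perform here; the content is entirely in organizing the known results so that no admissible surface is left unaccounted for.

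First I would recall the classification of bordered surfaces with marked points that admit ideal triangulations, following Fomin, Shapiro, and Thurston \cite{fomin}. Such a surface is determined up to homeomorphism by its genus $g$, its number of boundary components, its number of boundary marked points, and its number of interior punctures $p$, subject to the standard admissibility conditions (in particular every such surface carries at least one marked point, so a closed surface must have $p\geq 1$). I would then set aside the finite list of exceptional configurations that admit no triangulation, and hence have no associated quiver: the sphere with at most three punctures, the unpunctured or once-punctured monogon, the unpunctured digon, and the unpunctured triangle. For these the statement carries no content.

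Next I would split the admissible surfaces into two families. If the surface has nonempty boundary, it is covered by the first row of Table \ref{references}, and a maximal green sequence exists by Alim et al. \cite{alim}. If the surface is closed, I would stratify by the pair $(g,p)$: the case $p=1$ with any $g\geq 1$ is resolved negatively by Ladkani \cite{ladkani}; the sphere $g=0$ necessarily has $p\geq 4$ and is resolved positively by \cite{alim}; the torus $g=1$ with $p\geq 2$ is resolved positively by \cite{alim}; and for $g\geq 2$ the subcase $p=2$ is resolved by \cite{bucher} while the subcase $p\geq 3$ is exactly the content of Theorem \ref{main_theorem}. I would then check that these strata are mutually exclusive and that their union is all admissible closed surfaces, which completes the argument.

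I expect the only real obstacle to be bookkeeping rather than mathematics. One must verify that the case division is genuinely exhaustive and that the exceptional excluded surfaces are correctly identified, so that no admissible surface slips between the rows of the table; in particular one should confirm that the marked-point condition forces $p\geq 1$ for closed surfaces and $p\geq 4$ for the sphere, ruling out the degenerate low-puncture cases. Once this verification is in place, the theorem follows immediately by citing the appropriate entry of Table \ref{references} for each stratum.
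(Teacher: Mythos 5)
Your proposal is correct and is essentially the paper's own argument: the paper proves Theorem \ref{main2} by noting that surfaces are classified by genus, boundary components, punctures, and boundary marked points, and then checking that Table \ref{references} covers all cases. Your version simply fills in the bookkeeping the paper leaves implicit (the exceptional non-triangulable surfaces and the explicit stratification of closed surfaces by $(g,p)$), which is a welcome elaboration but not a different route.
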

\begin{proof}
Surfaces are determined by the genus of the surface, the number of boundary components, the number of punctures, and the number of marked points on each boundary component. Checking the surfaces given in Table \ref{references}, we see that all possible cases are accounted for. 
\end{proof}

In section 2 we give background on quivers and maximal green sequences. In section 5 we briefly discuss future work to be done with maximal green sequences. 

\section{Preliminaries}

We will follow the notation laid out by Br\"{u}stle, Dupont, and Perotin \cite{brustle}.
\begin{definition} A \textbf{quiver}, $Q$, is a directed graph containing no $2$-cycles or loops.
\end{definition}

The notation $Q_0$ will denote the vertices of $Q$. Also, $Q_1$ will denote the edges of $Q$ which are referred to as \textbf{arrows}. We will let $Q_0=[N]$.

\begin{definition} An \textbf{ice quiver} is a pair $(Q,F)$ where Q is a quiver as described above and $F \subset Q_0$ is a subset of vertices called frozen vertices; such that there are no arrows between them. For simplicity, we always assume that $Q_0=\{1,2,3, \dots ,n+m\}$ and that $F=\{n+1,n+2,\dots, n+m\}$ for some integers $n,m \geq 0$. If $F$ is empty we write $(Q,\emptyset)$ for the ice quiver. 
\end{definition}
In this paper we will be concerned with a process called mutation. Mutation is a process of obtaining a new ice quiver from an existing one. 
\begin{definition} Let $(Q,F)$ be an ice quiver and $k\in Q_0$ a non-frozen vertex. The \textbf{mutation} of a quiver $(Q,F)$ at a vertex $k$ is denoted $\mu_k$, and produces a new ice quiver $(\mu_k(Q),F)$. The vertices of $(\mu_k(Q),F)$ are the same vertices from $(Q,F)$. The arrows of the new quiver are obtained by performing the following $3$ steps:
\begin{enumerate}
\item For every 2-path $i \rightarrow k \rightarrow j$ , adjoin a new arrow $i \rightarrow j$.
\item Reverse the direction of all arrows incident to $k$.
\item Delete any 2-cycles created during the first two steps as well as any arrows created between frozen vertices.
\end{enumerate}
\end{definition}

It is important to note that we do not allow mutation at a frozen vertex. We will denote $Mut(Q)$ to be the set of all quivers who can be obtained from $Q$ by a sequence of mutations.

The ice quivers which are of concern in this paper have a very specific set of frozen vertices. We will be looking at what are referred to as the framed and coframed quivers associated to $Q$.

\begin{definition} 
The \textbf{framed quiver} associated with $Q$ is the quiver $\hat{Q}$ such that: 

$$\hat{Q}_0 = Q_0 \sqcup \{i'\text{ }|\text{ }i\in Q_0\}$$
$$\hat{Q}_1 = Q_1 \sqcup \{i \to i'\text{ }|\text{ }i \in Q_0\}$$

\flushleft The \textbf{coframed quiver} associated with $Q$ is the quiver $\breve{Q}$ such that:
$$\breve{Q}_0 = Q_0 \sqcup \{i'\text{ }|\text{ }i\in Q_0\}$$
$$\breve{Q}_1 = Q_1 \sqcup \{i' \to i\text{ }|\text{ }i \in Q_0\}$$
\end{definition}
Both quivers $\hat{Q}$ and $\breve{Q}$ are naturally ice quivers whose frozen vertices are commonly written as $\hat{Q}_0'$ and $\breve{Q}_0'$. Next we will talk about what it means for a vertex to be green or red.

\begin{definition}
Let $R \in Mut(\hat{Q})$. A non-frozen vertex $i \in R_0$ is called \textbf{green} if $$\{j'\in Q_0'\text{ }| \text{ } \exists \text{ } j' \rightarrow i \in R_1 \}=\emptyset.$$ It is called \textbf{red} if $$\{j'\in Q_0'\text{ }| \text{ } \exists \text{ } j' \leftarrow i \in R_1 \}=\emptyset.$$ 
\end{definition}

In  they show that every non-frozen vertex in $R_0$ is either red or green. This idea is what motivates our work in this paper. It arises as a question of green sequences. 

\begin{definition}
A \textbf{green sequence} for $Q$ is a sequence $\textbf{i}=\{i_1, \dots, i_l\} \subset Q_0$ such that $i_1$ is green in $\hat{Q}$ and for any $2\leq k \leq l$, the vertex $i_k$ is green in $\mu_{i_{k-1}}\circ \cdots \circ \mu_{i_1}(\hat{Q})$. The integer $l$ is called the length of the sequence $\textbf{i}$ and is denoted by $l(\textbf{i})$.

A green sequence \textbf{i} is called maximal if every non-frozen vertex in $\mu_{\textbf{i}}(\hat{Q})$ is red where $\mu_{\textbf{i}}=\mu_{i_{l}}\circ \cdots \circ \mu_{i_1}$. We denote the set of all maximal green sequences for $Q$ by $$\text{green}(Q)=\{\textbf{i } | \textbf{ i}\text{ is a maximal green sequence for }Q\}.$$  
\end{definition}

In this paper we will construct a maximal green sequence for a specific infinite family of quivers which will be described in the following section. In essence what we want to show is that green$(Q)\neq \emptyset$ for each quiver, $Q$, in this family. One important observation that the proof of our main result relies on is that if $Q_1$ and $Q_2$ are quivers such that $Q_1$ has a maximal green sequence $\lambda$, and $Q_1$ is a full subquiver of $Q_2$ consisting of only green vertices, then $\lambda$ is a green sequence for $Q_2$. 

\section{Constructing the Triangulation}
Following the work done by Fomin, Shapiro, and Thurston in \cite{fomin} we construct a quiver $Q_n^p$ associated to the genus $n$ surface with no boundary and $p\geq 3$ punctures. The triangulation of the surface with two punctures that was given in \cite{bucher} (see Figure \ref{triangulation2punc}) gives rise to a very natural generalization. When $p\geq 3$ we can remove the arc $f_n$ and replace it with a $(p-2)$-times punctured digon. We then triangulate the digon as shown on the right of Figure \ref{triangulation}. The final triangulation is also given in Figure \ref{triangulation}. 

From this triangulation we can construct the quiver $Q_n^p$. The quiver associated to the 3-torus with 7 punctures is given on the left of Figure \ref{quiver}. Also we define the quiver $P^{p-3}$ for $p\geq 3$ to be the full subquiver of $Q_n^p$ consisting of the vertices $\{g_0,g_1^1,g_2^1,g_3^1,\ldots,g_1^{p-3},g_2^{p-3},g_3^{p-3}\}$. $P^4$ is given on the right of Figure \ref{quiver}.
Note that by increasing the genus of the surface the cycle containing the $f$ vertices gets longer, and more handles are added. Increasing the number of punctures will increase the number of rows in the $P$ subquiver. The important thing to notice is that the fundamental shape of the quiver $Q_n^p$ doesn't change. 
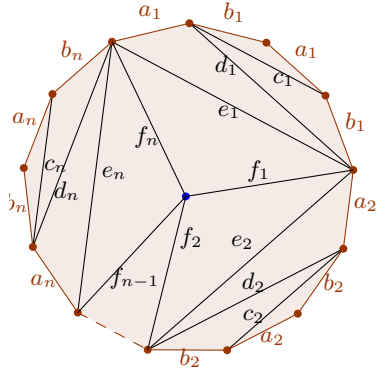
\begin{figure}
\[\begin{tikzpicture}[line cap=round,line join=round,>=triangle 45,x=0.3cm,y=0.3cm]
\clip(-11.54,-21.4) rectangle (5.26,-4.41);
\fill[color=zzttqq,fill=zzttqq,fill opacity=0.1] (-5.42,-20.19) -- (-1.9,-20.21) -- (1.23,-18.59) -- (3.25,-15.71) -- (3.69,-12.22) -- (2.46,-8.92) -- (-0.16,-6.57) -- (-3.57,-5.71) -- (-7,-6.53) -- (-9.64,-8.85) -- (-10.91,-12.13) -- (-10.51,-15.63) -- (-8.52,-18.54) -- cycle;
\draw [color=zzttqq] (-5.42,-20.19)-- (-1.9,-20.21);
\draw [color=zzttqq] (-1.9,-20.21)-- (1.23,-18.59);
\draw [color=zzttqq] (1.23,-18.59)-- (3.25,-15.71);
\draw [color=zzttqq] (3.25,-15.71)-- (3.69,-12.22);
\draw [color=zzttqq] (3.69,-12.22)-- (2.46,-8.92);
\draw [color=zzttqq] (2.46,-8.92)-- (-0.16,-6.57);
\draw [color=zzttqq] (-0.16,-6.57)-- (-3.57,-5.71);
\draw [color=zzttqq] (-3.57,-5.71)-- (-7,-6.53);
\draw [color=zzttqq] (-7,-6.53)-- (-9.64,-8.85);
\draw [color=zzttqq] (-9.64,-8.85)-- (-10.91,-12.13);
\draw [color=zzttqq] (-10.91,-12.13)-- (-10.51,-15.63);
\draw [color=zzttqq] (-10.51,-15.63)-- (-8.52,-18.54);
\draw [dash pattern=on 4pt off 4pt,color=zzttqq] (-8.52,-18.54)-- (-5.42,-20.19);
\draw (-7,-6.53)-- (3.69,-12.22);
\draw (3.69,-12.22)-- (-3.57,-5.71);
\draw (-3.57,-5.71)-- (2.46,-8.92);
\draw (3.69,-12.22)-- (-5.42,-20.19);
\draw (-5.42,-20.19)-- (3.25,-15.71);
\draw (3.25,-15.71)-- (-1.9,-20.21);
\draw (-8.52,-18.54)-- (-7,-6.53);
\draw (-7,-6.53)-- (-10.51,-15.63);
\draw (-10.51,-15.63)-- (-9.64,-8.85);
\begin{scriptsize}
\fill [color=zzttqq] (-5.42,-20.19) circle (1.5pt);
\fill [color=zzttqq] (-1.9,-20.21) circle (1.5pt);
\draw[color=zzttqq] (-3.53,-20.54) node {$b_2$};
\draw[color=zzttqq] (0.1,-19.68) node {$a_2$};
\draw[color=zzttqq] (2.84,-17.22) node {$b_2$};
\draw[color=zzttqq] (4.24,-13.74) node {$a_2$};
\draw[color=zzttqq] (3.81,-10.07) node {$b_1$};
\draw[color=zzttqq] (1.63,-6.99) node {$a_1$};
\draw[color=zzttqq] (-1.58,-5.23) node {$b_1$};
\draw[color=zzttqq] (-5.29,-5.23) node {$a_1$};
\draw[color=zzttqq] (-8.73,-6.91) node {$b_n$};
\draw[color=zzttqq] (-10.84,-9.96) node {$a_n$};
\draw[color=zzttqq] (-11.23,-13.67) node {$b_n$};
\draw[color=zzttqq] (-10.02,-17.14) node {$a_n$};
\draw[color=zzttqq] (-7.09,-19.64) node {};
\fill [color=zzttqq] (1.23,-18.59) circle (1.5pt);
\fill [color=zzttqq] (3.25,-15.71) circle (1.5pt);
\fill [color=zzttqq] (3.69,-12.22) circle (1.5pt);
\fill [color=zzttqq] (2.46,-8.92) circle (1.5pt);
\fill [color=zzttqq] (-0.16,-6.57) circle (1.5pt);
\fill [color=zzttqq] (-3.57,-5.71) circle (1.5pt);
\fill [color=zzttqq] (-7,-6.53) circle (1.5pt);
\fill [color=zzttqq] (-9.64,-8.85) circle (1.5pt);
\fill [color=zzttqq] (-10.91,-12.13) circle (1.5pt);
\fill [color=zzttqq] (-10.51,-15.63) circle (1.5pt);
\fill [color=zzttqq] (-8.52,-18.54) circle (1.5pt);
\fill [color=qqqqff] (-3.73,-13.39) circle (1.5pt);
\draw[color=black] (-1.81,-9.64) node {$e_1$};
\draw[color=black] (0.61,-8.2) node {$c_1$};
\draw[color=black] (-1.92,-7.57) node {$d_1$};
\draw[color=black] (-1.23,-15.42) node {$e_2$};
\draw[color=black] (-0.72,-17.2) node {$d_2$};
\draw[color=black] (-0.72,-18.7) node {$c_2$};
\draw[color=black] (-6.9,-12.34) node {$e_n$};
\draw[color=black] (-9,-13.15) node {$d_n$};
\draw[color=black] (-9.48,-12.03) node {$c_n$};

\draw[color=black] (-3.73,-13.39)-- (3.69,-12.22);
\draw[color=black] (-3.73,-13.39)-- (-5.42,-20.19);
\draw[color=black] (-3.73,-13.39)-- (-8.52,-18.54);
\draw[color=black] (-3.73,-13.39)-- (-7,-6.53);
\draw[color=black] (-.5,-12.25) node {$f_1$};
\draw[color=black] (-3.5,-15.25) node {$f_2$};
\draw[color=black] (-6,-17) node {$f_{n-1}$};
\draw[color=black] (-5.45,-10.75) node {$f_n$};
\end{scriptsize}
\end{tikzpicture}
\]
\caption{A triangulation of an $n$-torus with two punctures.}
\label{triangulation2punc}
\end{figure}

\begin{figure}
\begin{center}
$\begin{tikzpicture}[line cap=round,line join=round,>=triangle 45,x=0.3cm,y=0.3cm]
\clip(-11.54,-21.4) rectangle (5.26,-4.41);
\fill[color=zzttqq,fill=zzttqq,fill opacity=0.1] (-5.42,-20.19) -- (-1.9,-20.21) -- (1.23,-18.59) -- (3.25,-15.71) -- (3.69,-12.22) -- (2.46,-8.92) -- (-0.16,-6.57) -- (-3.57,-5.71) -- (-7,-6.53) -- (-9.64,-8.85) -- (-10.91,-12.13) -- (-10.51,-15.63) -- (-8.52,-18.54) -- cycle;
\draw [color=zzttqq] (-5.42,-20.19)-- (-1.9,-20.21);
\draw [color=zzttqq] (-1.9,-20.21)-- (1.23,-18.59);
\draw [color=zzttqq] (1.23,-18.59)-- (3.25,-15.71);
\draw [color=zzttqq] (3.25,-15.71)-- (3.69,-12.22);
\draw [color=zzttqq] (3.69,-12.22)-- (2.46,-8.92);
\draw [color=zzttqq] (2.46,-8.92)-- (-0.16,-6.57);
\draw [color=zzttqq] (-0.16,-6.57)-- (-3.57,-5.71);
\draw [color=zzttqq] (-3.57,-5.71)-- (-7,-6.53);
\draw [color=zzttqq] (-7,-6.53)-- (-9.64,-8.85);
\draw [color=zzttqq] (-9.64,-8.85)-- (-10.91,-12.13);
\draw [color=zzttqq] (-10.91,-12.13)-- (-10.51,-15.63);
\draw [color=zzttqq] (-10.51,-15.63)-- (-8.52,-18.54);
\draw [dash pattern=on 4pt off 4pt,color=zzttqq] (-8.52,-18.54)-- (-5.42,-20.19);
\draw (-7,-6.53)-- (3.69,-12.22);
\draw (3.69,-12.22)-- (-3.57,-5.71);
\draw (-3.57,-5.71)-- (2.46,-8.92);
\draw (3.69,-12.22)-- (-5.42,-20.19);
\draw (-5.42,-20.19)-- (3.25,-15.71);
\draw (3.25,-15.71)-- (-1.9,-20.21);
\draw (-8.52,-18.54)-- (-7,-6.53);
\draw (-7,-6.53)-- (-10.51,-15.63);
\draw (-10.51,-15.63)-- (-9.64,-8.85);
\begin{scriptsize}
\fill [color=zzttqq] (-5.42,-20.19) circle (1.5pt);
\fill [color=zzttqq] (-1.9,-20.21) circle (1.5pt);
\draw[color=zzttqq] (-3.53,-20.70) node {$b_2$};
\draw[color=zzttqq] (0.1,-19.68) node {$a_2$};
\draw[color=zzttqq] (2.84,-17.22) node {$b_2$};
\draw[color=zzttqq] (4.24,-13.74) node {$a_2$};
\draw[color=zzttqq] (3.81,-10.07) node {$b_1$};
\draw[color=zzttqq] (1.63,-6.99) node {$a_1$};
\draw[color=zzttqq] (-1.58,-5.23) node {$b_1$};
\draw[color=zzttqq] (-5.29,-5.23) node {$a_1$};
\draw[color=zzttqq] (-8.73,-6.91) node {$b_n$};
\draw[color=zzttqq] (-10.84,-9.96) node {$a_n$};
\draw[color=zzttqq] (-11.23,-13.67) node {$b_n$};
\draw[color=zzttqq] (-10.02,-17.14) node {$a_n$};
\draw[color=zzttqq] (-7.09,-19.64) node {};
\fill [color=zzttqq] (1.23,-18.59) circle (1.5pt);
\fill [color=zzttqq] (3.25,-15.71) circle (1.5pt);
\fill [color=zzttqq] (3.69,-12.22) circle (1.5pt);
\fill [color=zzttqq] (2.46,-8.92) circle (1.5pt);
\fill [color=zzttqq] (-0.16,-6.57) circle (1.5pt);
\fill [color=zzttqq] (-3.57,-5.71) circle (1.5pt);
\fill [color=zzttqq] (-7,-6.53) circle (1.5pt);
\fill [color=zzttqq] (-9.64,-8.85) circle (1.5pt);
\fill [color=zzttqq] (-10.91,-12.13) circle (1.5pt);
\fill [color=zzttqq] (-10.51,-15.63) circle (1.5pt);
\fill [color=zzttqq] (-8.52,-18.54) circle (1.5pt);
\fill [color=qqqqff] (-3.73,-13.39) circle (1.5pt);

\draw[color=black] (-3.73,-13.39)-- (3.69,-12.22);
\draw[color=black] (-3.73,-13.39)-- (-5.42,-20.19);
\draw[color=black] (-3.73,-13.39)-- (-8.52,-18.54);
\draw[color=blue] (-3.73,-13.39)-- (-4.55,-11.67);
\draw[color=blue] (-5.37,-9.95)-- (-7,-6.53);
\draw[color=black] (-.5,-13.75) node {$f_1$};
\draw[color=black] (-6,-17) node {$f_{n-1}$};

\fill [color=qqqqff] (-4.55,-11.67) circle (1.5pt);
\fill [color=qqqqff] (-5.37,-9.95) circle (1.5pt);
\fill [color=qqqqff] (-6.19,-8.23) circle (1.5pt);
\draw [dash pattern=on 4pt off 4pt,color=qqqqff] (-4.55,-11.67)-- (-5.37,-9.95);
\draw [color=qqqqff] (-3.73,-13.39) to[out=55,in=-35] (-7,-6.53);
\draw [color=qqqqff] (-7,-6.53) to[out=-90,in=-175] (-3.73,-13.39);

\draw [color=qqqqff] (-4.55,-11.67) to[out=55,in=-35] (-7,-6.53);
\draw [color=qqqqff] (-7,-6.53) to[out=-90,in=-175] (-4.55,-11.67);

\draw [color=qqqqff] (-5.37,-9.95) to[out=55,in=-35] (-7,-6.53);
\draw [color=qqqqff] (-7,-6.53) to[out=-90,in=-175] (-5.37,-9.95);

\draw[color=blue] (-5.45,-13.75) node {$f_n$};
\draw[color=blue] (-1.45,-11.75) node {$f_{n+2}$};

\end{scriptsize}
\end{tikzpicture}
$ \hspace{1cm}
$
\begin{tikzpicture}[x=0.7cm,y=0.7cm]
\coordinate (a) at (0,0);
\coordinate (b) at (10,0);
\coordinate (c) at (8,0);
\coordinate(d) at (2,0);
\coordinate (e) at (4,0);
\coordinate (f) at (6,0);
\fill[color=blue] (a) circle (1.5pt);
\fill[color=blue] (b) circle (1.5pt);
\fill[color=blue] (c) circle (1.5pt);
\fill[color=blue] (d) circle (1.5pt);
\fill[color=blue] (e) circle (1.5pt);
\fill[color=blue] (f) circle (1.5pt);
\draw[color=blue] (f) -- (b);
\draw[color=blue] (a) -- (e);
\draw[color=blue] [dash pattern=on 4pt off 4pt] (e) -- (f);
\draw[color=blue] (a) to[out=35,in=105] (b);
\draw[color=blue] (a) to[out=-35,in=-105] (b);
\draw[color=blue] (a) to[out=35,in=105] (c);
\draw[color=blue] (a) to[out=-35,in=-105] (c);
\draw[color=blue] (a) to[out=35,in=105] (e);
\draw[color=blue] (a) to[out=-35,in=-105] (e);
\draw[color=blue] (a) to[out=35,in=105] (f);
\draw[color=blue] (a) to[out=-35,in=-105] (f);
\begin{scriptsize}
\draw[color=blue] (1,.2) node {$g_0$};
\draw[color=blue] (3,.3) node {$g_1^1$};
\draw[color=blue] (4,-.8) node {$g_1^1$};
\draw[color=blue] (4,.8) node {$g_3^1$};
\draw[color=blue] (10,1.75) node {$f_{n+2}$};
\draw[color=blue] (10,-1.75) node {$f_{n}$};
\draw [color=blue](9,.2) node {$f_{n+1}$};
\draw[color=blue] (8,1.25) node {$g_3^{p-3}$};
\draw[color=blue] (8,-1.25) node {$g_1^{p-3}$};
\draw[color=blue] (7,.3) node {$g_2^{p-3}$};
\end{scriptsize}
\end{tikzpicture}$
\end{center}
\caption{A triangulation of $n$-torus with $p$ punctures.}\label{triangulation}
\end{figure}
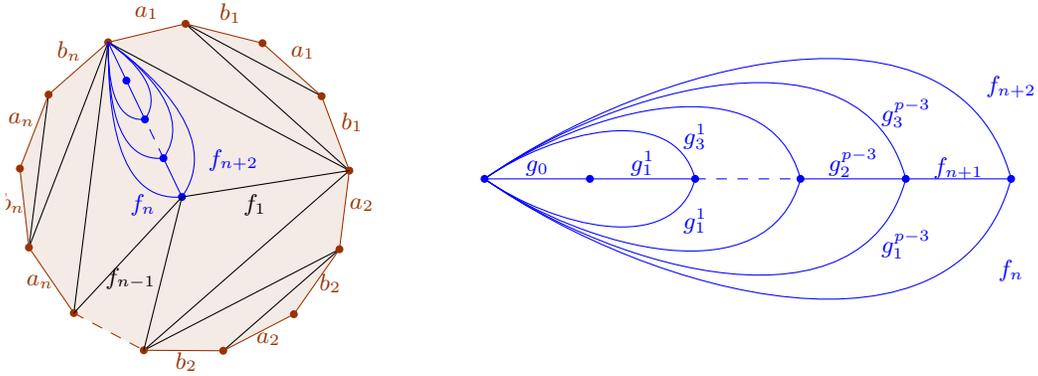

 \begin{figure} \begin{center}
 $\begin{xy} 0;<.5pt,0pt>:<0pt,-.5pt>:: 
(75,225) *+{f_3} ="0",
(125,175) *+{f_2} ="1",
(275,175) *+{f_1} ="2",
(325,225) *+{f_5} ="3",
(200,225) *+{f_4} ="4",
(125,375) *+{g_1^2} ="5",
(50,150) *+{e_3} ="6",
(200,125) *+{e_2} ="7",
(350,150) *+{e_1} ="8",
(275,375) *+{g_3^2} ="9",
(200,375) *+{g_2^2} ="10",
(200,425) *+{g_1^1} ="11",
(0,100) *+{d_3} ="12",
(100,100) *+{a_3} ="13",
(50,0) *+{b_3} ="14",
(50,75) *+{c_3} ="15",
(150,75) *+{d_2} ="16",
(250,75) *+{a_2} ="17",
(200,0) *+{b_2} ="18",
(200,50) *+{c_2} ="19",
(300,100) *+{d_1} ="20",
(400,100) *+{a_1} ="21",
(350,0) *+{b_1} ="22",
(350,75) *+{c_1} ="23",
(125,425) *+{g_1^1} ="24",
(200,475) *+{g_0} ="25",
(275,425) *+{g_3^1} ="26",
(125,325) *+{g_1^3} ="27",
(200,325) *+{g_2^3} ="28",
(275,325) *+{g_3^3} ="29",
(125,275) *+{g_1^4} ="30",
(200,275) *+{g_2^4} ="31",
(275,275) *+{g_3^4} ="32",
"0", {\ar"1"},
"4", {\ar"0"},
"6", {\ar"0"},
"0", {\ar"30"},
"1", {\ar"2"},
"1", {\ar"6"},
"7", {\ar"1"},
"2", {\ar"3"},
"2", {\ar"7"},
"8", {\ar"2"},
"3", {\ar"4"},
"3", {\ar"8"},
"32", {\ar"3"},
"30", {\ar"4"},
"4", {\ar"32"},
"10", {\ar"5"},
"5", {\ar"24"},
"27", {\ar"5"},
"5", {\ar"28"},
"12", {\ar"6"},
"6", {\ar"13"},
"16", {\ar"7"},
"7", {\ar"17"},
"20", {\ar"8"},
"8", {\ar"21"},
"9", {\ar"10"},
"26", {\ar"9"},
"28", {\ar"9"},
"9", {\ar"29"},
"24", {\ar"10"},
"10", {\ar"26"},
"24", {\ar"11"},
"11", {\ar"26"},
"13", {\ar"12"},
"14", {\ar"12"},
"12", {\ar"15"},
"14", {\ar"13"},
"13", {\ar"15"},
"15", {\ar|*+{\scriptstyle 2}"14"},
"17", {\ar"16"},
"18", {\ar"16"},
"16", {\ar"19"},
"18", {\ar"17"},
"17", {\ar"19"},
"19", {\ar|*+{\scriptstyle 2}"18"},
"21", {\ar"20"},
"22", {\ar"20"},
"20", {\ar"23"},
"22", {\ar"21"},
"21", {\ar"23"},
"23", {\ar|*+{\scriptstyle 2}"22"},
"25", {\ar"24"},
"26", {\ar"25"},
"28", {\ar"27"},
"30", {\ar"27"},
"27", {\ar"31"},
"29", {\ar"28"},
"31", {\ar"29"},
"29", {\ar"32"},
"31", {\ar"30"},
"32", {\ar"31"},
\end{xy}$
$\begin{xy} 0;<.5pt,0pt>:<0pt,-.5pt>:: 
(75,300) *+{g_0} ="0",
(0,225) *+{g_1^1} ="1",
(75,225) *+{g_2^1} ="2",
(150,225) *+{g_3^1} ="3",
(0,150) *+{g_1^2} ="4",
(75,150) *+{g_2^2} ="5",
(150,150) *+{g_3^2} ="6",
(0,75) *+{g_1^3} ="7",
(75,75) *+{g_2^3} ="8",
(150,75) *+{g_3^3} ="9",
(0,0) *+{g_1^4} ="10",
(75,0) *+{g_2^4} ="11",
(150,0) *+{g_3^4} ="12",
"1", {\ar"0"},
"0", {\ar"3"},
"2", {\ar"1"},
"4", {\ar"1"},
"1", {\ar"5"},
"3", {\ar"2"},
"5", {\ar"3"},
"3", {\ar"6"},
"5", {\ar"4"},
"7", {\ar"4"},
"4", {\ar"8"},
"6", {\ar"5"},
"8", {\ar"6"},
"6", {\ar"9"},
"8", {\ar"7"},
"10", {\ar"7"},
"7", {\ar"11"},
"9", {\ar"8"},
"11", {\ar"9"},
"9", {\ar"12"},
"11", {\ar"10"},
"12", {\ar"11"},
\end{xy}$
\end{center}
\caption{The quivers $Q_3^7$ (left) and $P_4$ (right).}
\label{quiver}
\end{figure}
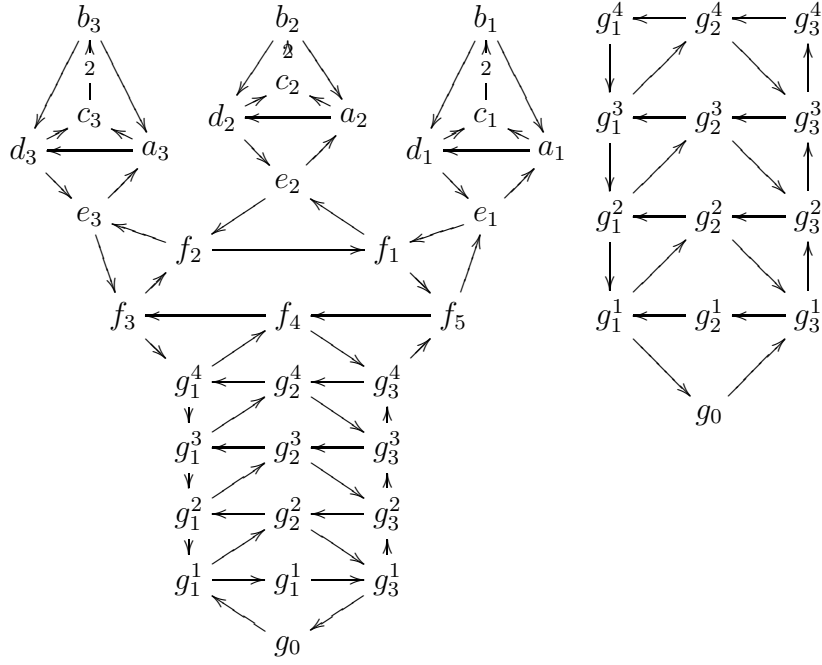

\section{Proof of Main Result}\label{proof_section}
We begin by recalling a result from \cite{bucher} that gives a maximal green sequence for oriented cycles. 
\begin{lemma}{\cite[Lemma 4.2]{bucher}} \label{cycle}
Let $C$ be a quiver that is an oriented $n$-cycle with vertices labeled $c_i$ $i=1,\ldots,n$, with $c_{i} \rightarrow c_{i-1}$ for $ 2 \leq i \leq n$ and $c_1 \rightarrow c_n$. Define a sequence $$\gamma(c_n,c_{n-1},\ldots,c_{2},c_1)=c_nc_{n-1}\cdots c_2c_1 c_3 c_4 \cdots c_{n-1} c_n.$$ Then $\gamma$ is a maximal green sequence for $C$. Furthermore, after applying $\gamma$ to $C$ the resultant quiver is still an oriented cycle with $c_2,$ and $c_1$ interchanged. 
\end{lemma}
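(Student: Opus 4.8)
The plan is to work entirely inside the framed quiver $\hat{C}$ and to track the whole mutation sequence step by step, recording at each stage both the arrows among the $c_i$ (the \emph{principal part}) and the arrows touching the frozen vertices $c_i'$. Since every vertex of $\hat{C}$ is green to begin with, it suffices to check that each mutation prescribed by $\gamma$ is carried out at a vertex that is green at that instant, and that after the last mutation every non-frozen vertex is red; the assertion about the final cycle will fall out of the same computation. Throughout I assume $n\geq 3$, so that $C$ genuinely has no $2$-cycle and $\gamma$ has length $2n-2$.

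First I would analyze the descending block $c_n,c_{n-1},\ldots,c_2,c_1$. The opening mutation $\mu_{c_n}$ reverses the two arrows at $c_n$ and creates the shortcut $c_1\to c_{n-1}$, so that $\{c_1,\ldots,c_{n-1}\}$ becomes an oriented $(n-1)$-cycle while $c_n$ is pushed onto a tail $c_{n-1}\to c_n\to c_1$ and turns red (with $c_n'\to c_n$ and $c_1\to c_n'$ frozen). I would then prove by induction on the descending step the invariant that, after mutating $c_n,\ldots,c_{k+1}$ with $k\geq 3$, the principal part consists of the oriented $k$-cycle $c_1\to c_k\to c_{k-1}\to\cdots\to c_2\to c_1$ together with the path $c_n\to c_{n-1}\to\cdots\to c_{k+1}\to c_1$ and the connector $c_k\to c_{k+1}$, while the frozen part is exactly $c_i\to c_i'$ for $i\leq k$ and $c_i'\to c_i$, $c_1\to c_i'$ for $i>k$. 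In this regime $c_k$ still carries its original arrow $c_k\to c_k'$ and has no incoming frozen arrow, hence is green, and the induction goes through cleanly. The last three mutations $\mu_{c_3},\mu_{c_2},\mu_{c_1}$ are where the cyclic pattern collapses (a would-be $2$-cycle on $\{c_1,c_2\}$ cannot survive) and must be computed by hand; the outcome I expect is the \emph{normal form} in which the principal part is the path $c_n\to\cdots\to c_3\to c_2$ with the extra arrow $c_1\to c_3$, the vertex $c_3$ is the unique green vertex, and the frozen arrows are $c_i'\to c_i$ for $i\neq 1,3$, together with $c_i'\to c_1$ for $i\neq 2$ and $c_3\to c_i'$ for $i\neq 2,3$.

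Second I would run the ascending block $c_3,c_4,\ldots,c_n$ starting from this normal form, establishing a mirror-image invariant: after mutating $c_3,\ldots,c_m$ (for $3\leq m\leq n-1$) the principal part is the oriented $m$-cycle $c_3\to c_1\to c_2\to c_4\to c_5\to\cdots\to c_m\to c_3$ together with a tail $c_m\to c_{m+1}$, $c_{m+1}\to c_2$, and $c_n\to\cdots\to c_{m+1}$, while $c_{m+1}$ is the unique green vertex. Each ascending mutation is then forced to occur at the lone green vertex, so $\gamma$ stays green; and the terminal mutation $\mu_{c_n}$ closes the cycle, producing the oriented $n$-cycle $c_1\to c_2\to c_n\to c_{n-1}\to\cdots\to c_3\to c_1$, which is precisely $C$ with $c_1$ and $c_2$ interchanged, and leaving every frozen arrow pointing from some $c_i'$ into a principal vertex so that all non-frozen vertices are red. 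This simultaneously certifies that $\gamma$ is maximal and proves the ``furthermore'' statement.

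The main obstacle will be the frozen-arrow bookkeeping rather than the principal part. The principal arrows obey the transparent ``cycle shrinks, then cycle regrows'' pattern above, but the arrows to and from the $c_i'$ proliferate, and the decisive $2$-cycle cancellations occur exactly at the awkward steps: most delicately at $\mu_{c_1}$, where $c_1$ is simultaneously incident to $c_1\to c_i'$ for every already-reddened $i$, and again throughout the ascending block, where the mutation at $c_{m+1}$ must annihilate the accumulated frozen arrows in just the right way. I would therefore phrase the intermediate invariants so that they record the frozen arrows explicitly, verify the base cases $n=3$ and $n=4$ by direct computation to anchor the two inductions, and concentrate the careful casework on these cancellation steps; the small cases also confirm that the possibly empty index ranges in the two invariants are handled correctly.
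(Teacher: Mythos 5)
This lemma is not proved in the paper at all --- it is quoted verbatim from \cite[Lemma 4.2]{bucher} --- so your argument can only be judged on its own terms. Your first half is correct: I verified that your descending invariant (the shrinking $k$-cycle $c_1\to c_k\to\cdots\to c_2\to c_1$ with the path $c_n\to\cdots\to c_{k+1}\to c_1$, the connector $c_k\to c_{k+1}$, and frozen arrows $c_i\to c_i'$ for $i\le k$, $c_i'\to c_i$ and $c_1\to c_i'$ for $i>k$) does propagate under $\mu_{c_{k+1}}$ for $k\ge 3$, and that the three hand computations $\mu_{c_3},\mu_{c_2},\mu_{c_1}$ land exactly on your stated normal form, with $c_3$ the unique green vertex.

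The ascending invariant, however, is false, and the induction you build on it would break. After mutating $c_3,\ldots,c_m$ the principal $m$-cycle is not $c_3\to c_1\to c_2\to c_4\to c_5\to\cdots\to c_m\to c_3$ but $c_3\to c_1\to c_2\to c_m\to c_{m-1}\to\cdots\to c_4\to c_3$: each mutation $\mu_{c_{m+1}}$ inserts the new vertex immediately after $c_2$ (by reversing $c_{m+1}\to c_2$), so the middle segment of the cycle runs \emph{downward}. The two descriptions agree only for $m\le 4$, so the false version is first invoked at stage $m=5$, i.e.\ for $n\ge 6$ --- beyond the reach of the base cases $n=3,4$ you propose to anchor with (even $n=5$ would not detect it). The error is not cosmetic: the decisive cancellation at the step $\mu_{c_{m+1}}$ is the deletion of the new arrow $c_m\to c_2$ (from the $2$-path $c_m\to c_{m+1}\to c_2$) against an existing arrow $c_2\to c_m$; your invariant supplies $c_2\to c_4$ instead of $c_2\to c_m$, so for $m\ge 5$ the arrow $c_m\to c_2$ survives, $c_m$ acquires principal out-degree $2$, and neither the stage-$(m+1)$ invariant nor your final cycle can be reached. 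Indeed your own terminal claim --- that $\mu_{c_n}$ yields $c_1\to c_2\to c_n\to c_{n-1}\to\cdots\to c_3\to c_1$ --- requires cancelling $c_{n-1}\to c_2$ against $c_2\to c_{n-1}$, an arrow your invariant does not contain; so the write-up is internally inconsistent, and the correct final cycle is evidence for the corrected invariant, not yours. With the cycle reoriented as above, and the frozen data recorded as $c_3'\to c_1$, $c_2'\to c_2$, $c_{j+1}'\to c_j$ for $3\le j\le m-1$, $c_1'\to c_m$ and $c_i'\to c_m$ for $i\ge m+1$, $c_{m+1}\to c_1'$ and $c_{m+1}\to c_i'$ for $i\ge m+2$, and $c_i'\to c_i$ for $i\ge m+2$, the ascending induction does close, $c_{m+1}$ is the unique green vertex at each stage, and the rest of your plan goes through.
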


We will rephrase the statement of Theorem \ref{main_theorem} to use the terminology we defined in the previous section. 
\begin{theorem}\label{main_v2}
Let $Q_n^p$ be the quiver obtained from our triangulation of a genus $n$ surface with no boundary and $p\geq 3$ punctures. Then $Q_n^p$ has a maximal green sequence given by 
$$\gamma(f_{n+2}f_{n+1}\cdots f_1)\sigma_n\cdots\sigma_1\alpha_0\alpha_1\cdots\alpha_{p-3} \gamma(f_{n+2},f_2,f_1,f_{3},f_4,\ldots,f_{n})f_{n+1} \beta_{p-3} \tau_1\tau_2\cdots\tau_n,$$
where $\gamma$ is defined as in Lemma \ref{cycle} and $\sigma, \tau, \alpha,$ and $\beta$ are defined as follows:
$$\sigma_i = e_id_ib_ic_ia_ib_id_ie_ic_ia_ib_i, $$ $$ \tau_i=e_ib_ia_ic_ie_id_ib_ia_ie_i,$$
$$\alpha_j=\begin{cases} 
g_0 & j=0 \\ 
g_3^1g_2^1g_1^1g_3^1g_0 & j=1 \\
g_3^2g_2^2g_2^1g_1^1g_1^2g_3^2g_2^1g_3^1g_0 & j=2 \\
g_3^j g_2^j g_1^{j-2} g_1^{j-1} g_1^{j} g_3^j g_1^{j-2} g_3^{j-1} g_1^{j-3} g_3^{j-2}\cdots g_1^1g_3^2g_2^1g_3^1g_0 & j\geq 3 \\ 
 \end{cases}$$
$$\beta_j=\begin{cases} 
\emptyset & j=0 \\ 
g_1^1g_2^1g_3^1 & j=1 \\
g_1^1g_1^2g_1^1g_3^2g_2^1g_3^1 & j=2 \\
g_1^{n-1}g_1^jg_1^{n-1}g_3^ng_1^{n-2}g_3^{n-1} g_1^{n-3}\cdots g_3^3 g_1^1g_3^2g_2^1g_3^1& j\geq 3 \\ 
 \end{cases}$$
\end{theorem}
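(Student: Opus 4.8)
The plan is to track the color of every non-frozen vertex of the framed quiver $\hat{Q}_n^p$ as we apply the proposed mutation word from left to right, checking at each step that the mutated vertex is green and confirming at the end that every vertex is red. Rather than carrying the entire framed quiver through all the mutations at once, I would exploit the block structure of $Q_n^p$ visible in Figure~\ref{quiver}: it consists of the central oriented cycle on $f_1,\ldots,f_{n+2}$, the $n$ \emph{handle blocks} on $\{a_i,b_i,c_i,d_i,e_i\}$ which attach to the cycle only through the arrows incident to each $e_i$, and the \emph{ladder} $P^{p-3}$ on the $g$-vertices, whose bottom row attaches to the cycle near $f_n,f_{n+1},f_{n+2}$ and whose apex is $g_0$. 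The organizing principle is the remark from Section~2: a maximal green sequence for a full subquiver all of whose vertices are green is a green sequence for the ambient quiver. This lets me run each factor of the word on its block while treating the remainder as a green environment, provided I verify that mutating within one block does not prematurely redden a vertex of a neighboring block.

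Breaking the word into phases, I would proceed as follows. The first factor $\gamma(f_{n+2}\cdots f_1)$ is the cycle sequence of Lemma~\ref{cycle} applied to the $f$-cycle; since every vertex starts green, Lemma~\ref{cycle} applies directly on the cycle, and the extra task is to check that the arrows joining the cycle to the handle and ladder blocks create no obstructing $2$-paths. The handle phases $\sigma_n\cdots\sigma_1$ and, later, $\tau_1\cdots\tau_n$ act on the mutually non-adjacent blocks $\{a_i,\ldots,e_i\}$, so each $\sigma_i$ (resp.\ each $\tau_i$) may be analyzed inside a single block, a computation that adapts the one carried out for the two-puncture surface in \cite{bucher}. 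The two cycle passes are reconciled through the second clause of Lemma~\ref{cycle}: after the first $\gamma$ the cycle is again an oriented cycle with $f_2$ and $f_1$ interchanged, so the permuted pass $\gamma(f_{n+2},f_2,f_1,f_3,\ldots,f_n)$ together with the single mutation $f_{n+1}$ (which I would treat separately precisely because of its distinguished adjacency to the bottom of the ladder) is again a legal cycle sequence on the reindexed cycle.

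The genuinely new content, and the main obstacle, is the ladder $P^{p-3}$, governed by the factors $\alpha_0\alpha_1\cdots\alpha_{p-3}$ and $\beta_{p-3}$, which I would analyze by induction on the number of punctures $p$. A base case such as $p=3$ (a single-row ladder) can be checked directly, with the surrounding cycle-and-handle structure anchored by \cite{bucher}. For the inductive step the recursive shape of $\alpha_j$ — each built by prefixing the new row's mutations $g_3^j,g_2^j,g_1^j$ to a word governing the already-established rows — should extend a green sequence for the ladder with one fewer row to one for the full ladder, validly inserting the new row's mutations while preserving the configuration at the apex $g_0$ and at the cycle attachment $f_n,f_{n+1},f_{n+2}$ that the next phase requires. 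The crux is the bookkeeping at this interface: I must confirm that descending the ladder through the $\alpha_j$ does not prematurely redden the cycle neighbors, that the intervening second cycle pass restores the top of the ladder to a green-compatible state, and that the single long word $\beta_{p-3}$ then finishes the remaining ladder vertices red without disturbing the already-red handle and cycle vertices. I expect this interface analysis, rather than any individual block computation, to be the delicate heart of the argument.
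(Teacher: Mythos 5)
Your plan follows essentially the same route as the paper's proof: the same decomposition into the $f$-cycle, the handle blocks, and the ladder $P^{p-3}$, with Lemma~\ref{cycle} handling both cycle passes, the two-puncture computation from \cite{bucher} handling the $\sigma_i$ and $\tau_i$ phases, and an induction on the number of ladder rows (the paper's Lemma~\ref{tail}) handling the $\alpha_j$, followed by the interface check that reduces $f_{n+1}$, $\beta_{p-3}$, and the final $\tau$'s to an equioriented-subquiver argument. The paper fills in exactly the bookkeeping you identify as the delicate part by exhibiting the intermediate quivers explicitly, so your outline is a faithful skeleton of its proof.
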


\begin{lemma} \label{tail}
$P_n$ has a maximal green sequence given by $\alpha_0\alpha_1\cdots \alpha_n$.
\end{lemma}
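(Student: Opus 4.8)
The plan is to induct on $n$. For the base cases, observe that $P_0$ is the single vertex $g_0$, for which $\alpha_0 = g_0$ is trivially a maximal green sequence, while $P_1$ is the oriented $4$-cycle $g_0 \to g_3^1 \to g_2^1 \to g_1^1 \to g_0$; here the concatenation $\alpha_0\alpha_1 = g_0\, g_3^1\, g_2^1\, g_1^1\, g_3^1\, g_0$ is precisely $\gamma(g_0, g_3^1, g_2^1, g_1^1)$, so Lemma \ref{cycle} applies verbatim. I would also verify $n=2$ (and probably $n=3$) directly by hand, since the formula for $\alpha_j$ takes distinct shapes for $j = 0,1,2$ and $j \ge 3$, and these small cases anchor the uniform argument.

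For the inductive step, I would compare $P_n$ with its full subquiver $P_{n-1}$ obtained by deleting the bottom row $g_1^n, g_2^n, g_3^n$. Since every $\alpha_j$ with $j \le n-1$ involves only vertices of superscript at most $n-1$, the prefix $\alpha_0\cdots\alpha_{n-1}$ never mutates the bottom row; by the full-green-subquiver principle recorded at the end of Section 2, $\alpha_0\cdots\alpha_{n-1}$ is a valid green sequence on $\hat{P}_n$ whose restriction to the $P_{n-1}$ vertices agrees with the maximal green sequence supplied by the induction hypothesis. Consequently, after this prefix all of $P_{n-1}$ is red, the bottom row remains green and untouched, and the only data that have changed are the arrows joining row $n-1$ to row $n$ together with the frozen ($c$-vector) arrows along the two outer columns $\{g_1^i\}$ and $\{g_3^i\}$.

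The heart of the argument is then to verify $\alpha_n$ against an explicit description of this intermediate quiver. I would set up an inductive invariant that records, in addition to ``rows $0,\dots,n-1$ are red'', the precise frozen-arrow configuration on the $g_1$- and $g_3$-columns and the attachment of the fresh bottom row. The subtle point is that $\alpha_n$ re-mutates vertices $g_1^{n-2}, g_1^{n-1}, g_3^{n-1}, \dots$ that are already red when $\alpha_n$ begins; this is legitimate precisely because $\alpha_n$ opens with the bottom-row mutations $g_3^n, g_2^n, \ldots, g_1^n$, and one must show that these reactivate (turn green) exactly the lower-column vertices in the order in which $\alpha_n$ subsequently visits them. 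I would therefore step through $\alpha_n$ mutation by mutation, using the invariant to confirm that each vertex is green when mutated, that the bottom row ends red, that the reactivated lower vertices are returned to red, and, crucially, that the resulting quiver again satisfies the invariant with $n$ in place of $n-1$, so that the induction closes.

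The main obstacle will be pinning down this invariant sharply enough: because $\alpha_n$ reaches all the way down the two outer columns to $g_0$, the bookkeeping of frozen arrows created and destroyed along those columns as the ladder grows is exactly what must be controlled, and showing that the ``re-greening'' produced by the bottom-row mutations propagates down the columns in lockstep with the descending zig-zag portion of $\alpha_n$ is the delicate step. Once the one-step local computation is organized around the invariant, the remaining verifications are finite and routine.
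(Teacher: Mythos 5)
Your proposal follows essentially the same route as the paper's proof: induction on $n$ with small cases checked by hand, the prefix $\alpha_0\cdots\alpha_{n-1}$ justified by the full-green-subquiver principle together with the induction hypothesis, and the final block $\alpha_n$ verified mutation by mutation against an explicit description of the intermediate quiver --- your ``invariant'' is exactly what the paper encodes in its displayed diagrams, together with its observation that the first four mutations of $\alpha_n$ involve only the top nine vertices (matching the $n=3$ configuration) and that the remaining mutations descend an equioriented zig-zag whose re-greening happens in lockstep with the sequence. The paper even flags the same delicate point you identify, remarking that the matching of the top-row configuration ``possibly need[s] to be shown in a lemma.''
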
 
\begin{proof}
The sequence is easily checked for $n=0,1,2$. For n=3, apply $\alpha_0\alpha_1\alpha_2$ to $P_3$. We know that this is a green sequence for the $P_2$ subquiver of $P_3$. The current state of the quiver is given in the following diagram.
$$ \begin{xy} 0;<.5pt,0pt>:<0pt,-.4pt>:: 
(75,50) *+{\color{darkspringgreen}{g_1^3}} ="0",
(165,150) *+{\color{darkspringgreen}{g_2^3}} ="1",
(325,50) *+{\color{darkspringgreen}{g_3^3}} ="2",
(325,125) *+{\color{red}{g_1^2}} ="3",
(200,300) *+{\color{red}{g_2^2} }="4",
(325,300) *+{\color{red}{g_3^2} }="5",
(225,225) *+{\color{red}{g_1^1} }="6",
(200,375) *+{\color{red}{g_2^1}} ="7",
(75,300) *+{\color{red}{g_3^1} }="8",
(75,200) *+{\color{red}{g_0}} ="9",
(25,50) *+{\color{blue}{{g_1^3}'}} ="10",
(100,100) *+{\color{blue}{{g_2^3}'} }="11",
(325,0) *+{\color{blue}{{g_3^3}'}} ="12",
(0,125) *+{\color{blue}{{g_1^2}'}} ="13",
(250,175) *+{\color{blue}{{g_2^2}'}} ="14",
(225,125) *+{\color{blue}{{g_3^2}'}} ="15",
(0,200) *+{\color{blue}{{g_1^1}'}} ="16",
(150,325) *+{\color{blue}{{g_2^1}'}} ="17",
(25,375) *+{\color{blue}{g_0'}} ="18",
(0,300) *+{\color{blue}{{g_3^1}'}} ="19",
"0", {\ar"2"},
"3", {\ar"0"},
"0", {\ar"10"},
"0", {\ar"13"},
"0", {\ar"15"},
"0", {\ar"16"},
"0", {\ar"18"},
"0", {\ar"19"},
"6", {\ar"1"},
"1", {\ar"8"},
"1", {\ar"11"},
"1", {\ar"14"},
"1", {\ar"15"},
"2", {\ar"3"},
"2", {\ar"12"},
"5", {\ar"3"},
"3", {\ar"6"},
"15", {\ar"3"},
"5", {\ar"4"},
"4", {\ar"8"},
"17", {\ar"4"},
"6", {\ar"5"},
"7", {\ar"5"},
"18", {\ar"5"},
"8", {\ar"6"},
"6", {\ar"9"},
"14", {\ar"6"},
"8", {\ar"7"},
"19", {\ar"7"},
"9", {\ar"8"},
"16", {\ar"8"},
"13", {\ar"9"},
\end{xy} $$
We now apply the first four mutations of $\alpha_3$ to the quiver above. 
$$\begin{array}{l r}
\begin{xy} 0;<.5pt,0pt>:<0pt,-.4pt>:: 
(75,50) *+{\color{darkspringgreen}{g_1^3}} ="0",
(165,150) *+{\color{darkspringgreen}{g_2^3}} ="1",
(325,50) *+{\color{red}{g_3^3}} ="2",
(325,125) *+{\color{red}{g_1^2}} ="3",
(200,300) *+{\color{red}{g_2^2} }="4",
(325,300) *+{\color{red}{g_3^2} }="5",
(225,225) *+{\color{red}{g_1^1} }="6",
(200,375) *+{\color{red}{g_2^1}} ="7",
(75,300) *+{\color{red}{g_3^1} }="8",
(75,200) *+{\color{red}{g_0}} ="9",
(25,50) *+{\color{blue}{{g_2^3}'}} ="10",
(100,100) *+{\color{blue}{{g_2^3}'} }="11",
(325,0) *+{\color{blue}{{g_3^3}'}} ="12",
(0,125) *+{\color{blue}{{g_1^2}'}} ="13",
(250,175) *+{\color{blue}{{g_2^2}'}} ="14",
(225,125) *+{\color{blue}{{g_3^2}'}} ="15",
(0,200) *+{\color{blue}{{g_1^1}'}} ="16",
(150,325) *+{\color{blue}{{g_2^1}'}} ="17",
(25,375) *+{\color{blue}{g_0'}} ="18",
(0,300) *+{\color{blue}{{g_3^1}'}} ="19",
"2", {\ar"0"},
"0", {\ar"10"},
"0", {\ar"12"},
"0", {\ar"13"},
"0", {\ar"15"},
"0", {\ar"16"},
"0", {\ar"18"},
"0", {\ar"19"},
"6", {\ar"1"},
"1", {\ar"8"},
"1", {\ar"11"},
"1", {\ar"14"},
"1", {\ar"15"},
"3", {\ar"2"},
"12", {\ar"2"},
"5", {\ar"3"},
"3", {\ar"6"},
"15", {\ar"3"},
"5", {\ar"4"},
"4", {\ar"8"},
"17", {\ar"4"},
"6", {\ar"5"},
"7", {\ar"5"},
"18", {\ar"5"},
"8", {\ar"6"},
"6", {\ar"9"},
"14", {\ar"6"},
"8", {\ar"7"},
"19", {\ar"7"},
"9", {\ar"8"},
"16", {\ar"8"},
"13", {\ar"9"},
\end{xy} &

\begin{xy} 0;<.5pt,0pt>:<0pt,-.4pt>:: 
(75,50) *+{\color{darkspringgreen}{g_1^3}} ="0",
(200,250) *+{\color{red}{g_2^3}} ="1",
(325,50) *+{\color{red}{g_3^3}} ="2",
(325,175)*+{\color{red}{g_1^2}} ="3",
(200,300) *+{\color{red}{g_2^2} }="4",
(325,300) *+{\color{red}{g_3^2} }="5",
(200,150)  *+{\color{darkspringgreen}{g_1^1} }="6",
(200,375) *+{\color{red}{g_2^1}} ="7",
(75,300) *+{\color{red}{g_3^1} }="8",
(75,200) *+{\color{red}{g_0}} ="9",
(25,50) *+{\color{blue}{{g_1^3}'}} ="10",
(125,225)*+{\color{blue}{{g_2^3}'} }="11",
(325,0) *+{\color{blue}{{g_3^3}'}} ="12",
(0,125) *+{\color{blue}{{g_1^2}'}} ="13",
(275,275) *+{\color{blue}{{g_2^2}'}} ="14",
(275,100) *+{\color{blue}{{g_3^2}'}} ="15",
(0,200) *+{\color{blue}{{g_1^1}'}} ="16",
(150,325) *+{\color{blue}{{g_2^1}'}} ="17",
(25,375) *+{\color{blue}{g_3^1}'} ="18",
(0,300) *+{\color{blue}{{g_0}'}} ="19",
"2", {\ar"0"},
"0", {\ar"10"},
"0", {\ar"12"},
"0", {\ar"13"},
"0", {\ar"15"},
"0", {\ar"16"},
"0", {\ar"18"},
"0", {\ar"19"},
"1", {\ar"6"},
"8", {\ar"1"},
"11", {\ar"1"},
"14", {\ar"1"},
"15", {\ar"1"},
"3", {\ar"2"},
"12", {\ar"2"},
"5", {\ar"3"},
"3", {\ar"6"},
"15", {\ar"3"},
"5", {\ar"4"},
"4", {\ar"8"},
"17", {\ar"4"},
"6", {\ar"5"},
"7", {\ar"5"},
"18", {\ar"5"},
"6", {\ar"9"},
"6", {\ar"11"},
"6", {\ar"15"},
"8", {\ar"7"},
"19", {\ar"7"},
"9", {\ar"8"},
"16", {\ar"8"},
"13", {\ar"9"},
\end{xy}\\

\begin{xy} 0;<.5pt,0pt>:<0pt,-.4pt>:: 
(75,50) *+{\color{darkspringgreen}{g_1^3}} ="0",
(200,250) *+{\color{darkspringgreen}{g_2^3}} ="1",
(325,50) *+{\color{red}{g_3^3}} ="2",
(200,125)*+{\color{red}{g_1^2}} ="3",
(200,300) *+{\color{red}{g_2^2} }="4",
(325,300) *+{\color{red}{g_3^2} }="5",
(325,175)  *+{\color{red}{g_1^1} }="6",
(200,375) *+{\color{red}{g_2^1}} ="7",
(75,300) *+{\color{red}{g_3^1} }="8",
(75,200) *+{\color{red}{g_0}} ="9",
(25,50) *+{\color{blue}{{g_1^3}'}} ="10",
(325,100)  *+{\color{blue}{{g_2^3}'} }="11",
(325,0) *+{\color{blue}{{g_3^3}'}} ="12",
(0,125) *+{\color{blue}{{g_1^2}'}} ="13",
(200,215) *+{\color{blue}{{g_2^2}'}} ="14",
(200,0) *+{\color{blue}{{g_3^2}'}} ="15",
(0,200) *+{\color{blue}{{g_1^1}'}} ="16",
(150,325) *+{\color{blue}{{g_2^1}'}} ="17",
(25,375) *+{\color{blue}{g_3^1}'} ="18",
(0,300) *+{\color{blue}{{g_0}'}} ="19",
"2", {\ar"0"},
"0", {\ar"10"},
"0", {\ar"12"},
"0", {\ar"13"},
"0", {\ar"15"},
"0", {\ar"16"},
"0", {\ar"18"},
"0", {\ar"19"},
"1", {\ar"5"},
"6", {\ar"1"},
"8", {\ar"1"},
"1", {\ar"9"},
"14", {\ar"1"},
"3", {\ar"2"},
"12", {\ar"2"},
"6", {\ar"3"},
"3", {\ar"9"},
"3", {\ar"11"},
"5", {\ar"4"},
"4", {\ar"8"},
"17", {\ar"4"},
"5", {\ar"6"},
"7", {\ar"5"},
"18", {\ar"5"},
"9", {\ar"6"},
"11", {\ar"6"},
"15", {\ar"6"},
"8", {\ar"7"},
"19", {\ar"7"},
"9", {\ar"8"},
"16", {\ar"8"},
"13", {\ar"9"},
\end{xy} &
\begin{xy} 0;<.5pt,0pt>:<0pt,-.4pt>:: 
(75,50) *+{\color{darkspringgreen}{g_1^3}} ="0",
(200,250)*+{\color{red}{g_2^3}} ="1",
(325,50) *+{\color{red}{g_3^3}} ="2",
(198,124)  *+{\color{red}{g_1^2}} ="3",
(200,300) *+{\color{red}{g_2^2} }="4",
(325,300) *+{\color{red}{g_3^2} }="5",
(325,175) *+{\color{red}{g_1^1} }="6",
(200,375) *+{\color{red}{g_2^1}} ="7",
(75,300) *+{\color{red}{g_3^1} }="8",
(75,200) *+{\color{red}{g_0}} ="9",
(25,50) *+{\color{blue}{{g_1^3}'}} ="10",
(300,100)*+{\color{blue}{{g_2^3}'} }="11",
(325,0) *+{\color{blue}{{g_3^3}'}} ="12",
(0,125) *+{\color{blue}{{g_1^2}'}} ="13",
(200,200)  *+{\color{blue}{{g_2^2}'}} ="14",
(200,0) *+{\color{blue}{{g_3^2}'}} ="15",
(0,200) *+{\color{blue}{{g_1^1}'}} ="16",
(150,325) *+{\color{blue}{{g_2^1}'}} ="17",
(25,375) *+{\color{blue}{g_3^1}'} ="18",
(0,300) *+{\color{blue}{{g_0}'}} ="19",
"2", {\ar"0"},
"0", {\ar"10"},
"0", {\ar"12"},
"0", {\ar"13"},
"0", {\ar"15"},
"0", {\ar"16"},
"0", {\ar"18"},
"0", {\ar"19"},
"1", {\ar"5"},
"6", {\ar"1"},
"8", {\ar"1"},
"1", {\ar"9"},
"14", {\ar"1"},
"2", {\ar"3"},
"6", {\ar"2"},
"12", {\ar"2"},
"3", {\ar"6"},
"9", {\ar"3"},
"11", {\ar"3"},
"5", {\ar"4"},
"4", {\ar"8"},
"17", {\ar"4"},
"5", {\ar"6"},
"7", {\ar"5"},
"18", {\ar"5"},
"15", {\ar"6"},
"8", {\ar"7"},
"19", {\ar"7"},
"9", {\ar"8"},
"16", {\ar"8"},
"13", {\ar"9"},
\end{xy}\\
\end{array}$$
After these four mutations $g_1^3$ is the only remaining green vertex, and is the initial vertex in a 2-path through a frozen vertex for 6 vertices. However, the terminal vertex in these 2-paths form an equioriented affine subquiver with $g_1^3$ being the sink for this subquiver. The remaining mutations of $\alpha_3$ is just the mutation along the vertices of this subquiver.  Note that at each step through this part of the sequence there is a unique green vertex with a unique edge with head at mutable vertex and tail at the green vertex. Rearranging the vertices from our previous picture we obtain the following picture from which it is easy to see that the remaining mutations give a maximal green sequence.
$$\begin{xy} 0;<1pt,0pt>:<0pt,-.5pt>:: 
(150,0) *+{\color{darkspringgreen}{g_1^3}} ="0",
(275,200) *+{\color{red}{g_2^3}} ="1",
(150,50) *+{\color{red}{g_3^3}} ="2",
(350,200) *+{\color{red}{g_1^2}} ="3",
(200,200) *+{\color{red}{g_2^2}} ="4",
(150,150) *+{\color{red}{g_3^2}} ="5",
(150,100) *+{\color{red}{g_1^1}} ="6",
(150,200) *+{\color{red}{g_2^1}} ="7",
(150,250) *+{\color{red}{g_3^1}} ="8",
(150,300) *+{\color{red}{g_0}} ="9",
(0,300) *+{\color{blue}{{g_1^3}'}} ="10",
(350,75) *+{\color{blue}{{g_2^3}'}} ="11",
(0,50) *+{\color{blue}{{g_3^3}'}} ="12",
(0,0) *+{\color{blue}{{g_1^2}'}} ="13",
(275,25) *+{\color{blue}{{g_2^2}'}} ="14",
(0,100) *+{\color{blue}{{g_3^2}'}} ="15",
(0,250) *+{\color{blue}{{g_1^1}'}} ="16",
(200,0) *+{\color{blue}{{g_2^1}'}} ="17",
(0,150) *+{\color{blue}{{g_0}'}} ="18",
(0,200) *+{\color{blue}{{g_3^1}'}} ="19",
"2", {\ar"0"},
"0", {\ar"10"},
"0", {\ar"12"},
"0", {\ar"13"},
"0", {\ar"15"},
"0", {\ar"16"},
"0", {\ar"18"},
"0", {\ar"19"},
"1", {\ar"5"},
"6", {\ar"1"},
"8", {\ar"1"},
"1", {\ar"9"},
"14", {\ar"1"},
"2", {\ar"3"},
"6", {\ar"2"},
"12", {\ar"2"},
"3", {\ar"6"},
"9", {\ar"3"},
"11", {\ar"3"},
"5", {\ar"4"},
"4", {\ar"8"},
"17", {\ar"4"},
"5", {\ar"6"},
"7", {\ar"5"},
"18", {\ar"5"},
"15", {\ar"6"},
"8", {\ar"7"},
"19", {\ar"7"},
"9", {\ar"8"},
"16", {\ar"8"},
"13", {\ar"9"},
\end{xy}$$ 
Thus $\alpha_0\alpha_1\alpha_2\alpha_3$ is a maximal green sequence for $P_3$. Our claim follows from induction on $n$. Suppose for $1 \leq k < n$ $\alpha_0\cdots\alpha_{k}$ gives a maximal green sequence for $\alpha_k$. Note that $Q_n$ has a subquiver of $P_{n-1}$ for which $\alpha_0\alpha_{n-1}$ is a green sequence. Note that the local configuration of "top nine" vertices $g_i^j$ $i=1,2,3 \, j=n-2,n-1,n$ have the exact same configuration as the "top nine" vertices of $P_3$ and the first four mutations of $\alpha_n$ exactly mimic that of $n=3$ case. (Possibly need to show this in a lemma.) Therefore we get that after the green sequence $\alpha_0\cdots\alpha_{n-1} g_3^j g_2^j g_1^{j-2} g_1^{j-1}$ we have the quiver:
$$\begin{xy} 0;<.75pt,0pt>:<0pt,-.4pt>:: 
(225,0) *+{\color{darkspringgreen}{g_1^n}} ="0",
(500,275) *+{\color{red}{g_2^n}} ="1",
(225,50) *+{\color{red}{g_3^n}} ="2",
(575,275) *+{\color{red}{g_1^{n-1}}} ="3",
(425,275) *+{\color{red}{g_2^{n-1}}} ="4",
(225,150) *+{\color{red}{g_3^{n-1}}} ="5",
(225,100) *+{\color{red}{g_1^{n-2}}} ="6",
(350,275) *+{\color{red}{g_2^{n-2}}} ="7",
(225,275) *+{\color{red}{g_3^{n-2}}} ="8",
(225,225) *+{\color{red}{g_1^{n-3}}} ="9",
(275,300) *+{\color{red}{\cdots}} ="10",
(225,375) *+{\color{red}{g_3^2}} ="11",
(225,325) *+{\color{red}{\vdots}} ="12",
(225,425) *+{\color{red}{g_2^1}} ="13",
(225,475) *+{\color{red}{g_3^1}} ="14",
(225,525) *+{\color{red}{g_0}} ="15",
(0,525) *+{\color{blue}{{g_1^n}'}} ="16",
(575,225) *+{\color{blue}{{g_2^n}'}} ="17",
(0,50) *+{\color{blue}{{g_3^n}'}} ="18",
(0,0) *+{\color{blue}{{g_1^{n-1}}'}} ="19",
(500,175) *+{\color{blue}{{g_2^{n-1}}'}} ="20",
(0,100) *+{\color{blue}{{g_3^{n-1}}'}} ="21",
(0,475) *+{\color{blue}{{g_1^{n-2}}'}} ="22",
(425,125) *+{\color{blue}{{g_2^{n-2}}'}} ="23",
(0,150) *+{\color{blue}{{g_3^{n-2}}'}} ="24",
(0,425) *+{\color{blue}{{g_1^{n-3}}'}} ="25",
(350,75) *+{\color{blue}{{g_2^{n-3}}'}} ="26",
(0,225) *+{\color{blue}{{g_3^2}'}} ="27",
(0,375) *+{\color{blue}{{g_0}'}} ="28",
(275,25) *+{\color{blue}{{\cdots}'}} ="29",
(0,275) *+{\color{blue}{{g_3^1}'}} ="30",
(0,325) *+{\color{blue}{{\vdots}'}} ="31",
"2", {\ar"0"},
"0", {\ar"16"},
"0", {\ar"18"},
"0", {\ar"19"},
"0", {\ar"21"},
"0", {\ar"22"},
"0", {\ar"24"},
"0", {\ar"25"},
"0", {\ar"27"},
"0", {\ar"28"},
"0", {\ar"30"},
"0", {\ar"31"},
"1", {\ar"5"},
"6", {\ar"1"},
"14", {\ar"1"},
"1", {\ar"15"},
"20", {\ar"1"},
"2", {\ar"3"},
"6", {\ar"2"},
"18", {\ar"2"},
"3", {\ar"6"},
"15", {\ar"3"},
"17", {\ar"3"},
"5", {\ar"4"},
"4", {\ar"9"},
"13", {\ar"4"},
"4", {\ar"14"},
"23", {\ar"4"},
"5", {\ar"6"},
"9", {\ar"5"},
"24", {\ar"5"},
"21", {\ar"6"},
"7", {\ar"8"},
"9", {\ar"7"},
"11", {\ar"7"},
"7", {\ar"13"},
"26", {\ar"7"},
"8", {\ar"9"},
"8", {\ar"10"},
"12", {\ar"8"},
"30", {\ar"8"},
"27", {\ar"9"},
"10", {\ar"11"},
"29", {\ar"10"},
"11", {\ar"12"},
"13", {\ar"11"},
"28", {\ar"11"},
"31", {\ar"12"},
"14", {\ar"13"},
"25", {\ar"13"},
"15", {\ar"14"},
"22", {\ar"14"},
"19", {\ar"15"},
\end{xy} $$
Where again it is easy to check the remaining mutations will give us a maximal green sequence for $P_n$. 
\end{proof}

\begin{theorem}{\cite[Theorem 4.1]{bucher}} \label{2punc}
The quiver $Q_n^2$ has a maximal green sequence of 
\[\gamma(f_n,f_{n-1},\dots,f_1)\sigma_n\sigma_{n-1} \dots \sigma_{1}\gamma(f_2, f_1,f_3 \dots f_n) \tau_n\tau_{n-1}\dots\tau_1.\]
\end{theorem}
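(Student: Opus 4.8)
The plan is to apply the candidate sequence to the framed quiver $\hat{Q}_n^2$ in the four stages dictated by its four factors, $\gamma(f_n,\dots,f_1)$, then $\sigma_n\cdots\sigma_1$, then $\gamma(f_2,f_1,f_3,\dots,f_n)$, and finally $\tau_n\cdots\tau_1$, checking at each mutation that the vertex being mutated is green and verifying at the end that every mutable vertex is red. The two structural inputs I would lean on are Lemma \ref{cycle}, which resolves the oriented $f$-cycle, and the observation recorded in Section 2 that a maximal green sequence for a full subquiver consisting of green vertices remains a green sequence in the ambient quiver.

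First I would treat the cycle. Every mutable vertex of $\hat{Q}_n^2$ starts green, and the vertices $f_1,\dots,f_n$ form an oriented $n$-cycle that is a full subquiver, so Lemma \ref{cycle} identifies $\gamma(f_n,\dots,f_1)$ as a green sequence on that cycle; since the only arrows leaving the cycle meet the vertices $e_i$, I would check that each $f_j$ is still green when mutated and record the arrows newly created between the cycle and the $e_i$. Crucially, Lemma \ref{cycle} returns the cycle to an oriented cycle with $f_1$ and $f_2$ transposed, which is exactly why the later $\gamma$ is taken in the order $f_2,f_1,f_3,\dots,f_n$.

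Next I would exploit the block symmetry. After the first $\gamma$, the blocks $B_i=\{a_i,b_i,c_i,d_i,e_i\}$ are coupled to one another only through the $f$-cycle, and I would argue they have become mutually independent green full subquivers, so that $\sigma_n\cdots\sigma_1$ decouples into one computation per block. For a single block I would verify directly that $\sigma_i=e_id_ib_ic_ia_ib_id_ie_ic_ia_ib_i$ is a green sequence turning $B_i$ red while leaving the cycle in its post-Lemma \ref{cycle} state; the double (multiplicity-two) arrow $c_i\to b_i$ is what forces the repetition pattern in $\sigma_i$. With all blocks red, the $f$-cycle is once more a green oriented cycle (now transposed), so $\gamma(f_2,f_1,f_3,\dots,f_n)$ applies by Lemma \ref{cycle}, and then $\tau_n\cdots\tau_1$ is checked block-by-block to send the surviving green vertices to red, after which a final inspection confirms no mutable vertex receives an arrow from a frozen vertex.

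The hard part will be the bookkeeping at the interface between the $f$-cycle and the blocks: the arrows $d_i\to e_i$ and $e_i\to a_i$, together with the two cycle arrows incident to each $e_i$, are rewritten every time either the cycle or a block is mutated, and I must ensure these rewrites never prematurely turn a not-yet-mutated vertex red nor create a green arrow that would invalidate a later mutation. Establishing the mutual independence of the blocks after the first $\gamma$, so that the $\sigma$ and $\tau$ factors genuinely separate, is the crux; once that is in place the cycle factors follow immediately from Lemma \ref{cycle} and the single-block verifications are routine.
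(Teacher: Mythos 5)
This paper does not actually prove Theorem~\ref{2punc}: the statement is imported verbatim from \cite[Theorem 4.1]{bucher}, and the only in-paper trace of its proof is the way the same strategy is reused and extended in the proof of Theorem~\ref{main_v2}. Measured against that strategy, your staging is the right one and essentially the same: resolve the $f$-cycle by Lemma~\ref{cycle}, handle the blocks $B_i=\{a_i,b_i,c_i,d_i,e_i\}$ one at a time using the fact that each meets the rest of the quiver only through $e_i$, run the transposed cycle sequence, and finish with the $\tau_i$, invoking the green-full-subquiver observation throughout.

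There is, however, one concrete inconsistency in your plan, and it sits exactly at the point you call the crux. You claim that each $\sigma_i$ turns $B_i$ red ``while leaving the cycle in its post-Lemma~\ref{cycle} state,'' and separately that once all blocks are red ``the $f$-cycle is once more a green oriented cycle.'' These cannot both hold. Since $\gamma(f_n,\dots,f_1)$ is a \emph{maximal} green sequence for the full subquiver on the $f_i$, and mutation is local (so the restriction to the $f_i$ and their frozen copies evolves exactly as for the standalone framed cycle), after the first $\gamma$ every $f_i$ is red, i.e.\ receives an arrow from a frozen vertex. If the $\sigma$'s genuinely did not alter the arrows between the $f_i$ and the frozen vertices, then $\gamma(f_2,f_1,f_3,\dots,f_n)$ would begin by mutating a red vertex and the proposed sequence would fail to be green. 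What actually happens --- and what the paper records in the proof of Theorem~\ref{main_v2}, where after $\gamma\sigma_n\cdots\sigma_1\alpha_0\cdots\alpha_{p-3}$ precisely the $f$'s are the green vertices --- is that the first $\gamma$ creates arrows from the $e_i$ into frozen copies of cycle vertices, and the mutations at $e_i$ inside $\sigma_i$ then create 2-cycles through those frozen copies, cancelling the frozen-to-$f$ arrows and thereby re-greening the cycle. So the blocks decouple from one another, but emphatically not from the cycle: your single-block verification of $\sigma_i$ must track the configuration formed by $e_i$, its two neighboring $f$'s, and their frozen copies, because that bookkeeping is the very mechanism that legalizes the third factor of the sequence. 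With that point repaired, your outline matches the proof in \cite{bucher}.
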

We now prove our main result. To assist in reading the proof we provide a running example of the maximal green sequence for $Q^5_3$.
\begin{proof}[Proof of Theorem \ref{main_v2}]
By Lemma \ref{tail} and the proof of Theorem \ref{2punc} in \cite{bucher} we know that $$\gamma(f_{n+2}f_{n+1}\cdots f_1)\sigma_n\cdots\sigma_1\alpha_0\alpha_1\cdots\alpha_{p-3}$$ is a green sequence. After performing this mutation sequence we have the that all of the vertices are red except for the vertices $f_1\cdots f_{n+2}$. Furthermore, all of these vertices except for $f_{n+1}$ form an $(n+1)$-cycle .
$$\begin{xy} 0;<.6pt,0pt>:<0pt,-.45pt>:: 
(225,275) *+{\color{darkspringgreen}{f_3}} ="0",
(350,225) *+{\color{darkspringgreen}{f_2} }="1",
(250,225) *+{\color{darkspringgreen}{f_1} }="2",
(375,275) *+{\color{darkspringgreen}{f_5}} ="3",
(215,365) *+{\color{darkspringgreen}{f_4}} ="4",
(325,325) *+{\color{red}{g_1^2} }="5",
(150,200) *+{\color{red}{e_3} }="6",
(325,175) *+{\color{red}{e_2} }="7",
(450,250) *+{\color{red}{e_1} }="8",
(375,360) *+{\color{red}{g_3^2} }="9",
(325,400) *+{\color{red}{g_2^2}} ="10",
(200,400) *+{\color{red}{g_1^3}} ="11",
(125,150) *+{\color{red}{d_3}} ="12",
(0,125) *+{\color{red}{a_3}} ="13",
(50,250) *+{\color{red}{b_3}} ="14",
(50,175) *+{\color{red}{c_3}} ="15",
(375,150) *+{\color{red}{d_2}} ="16",
(325,50) *+{\color{red}{a_2}} ="17",
(275,150) *+{\color{red}{b_2}} ="18",
(325,125) *+{\color{red}{c_2}} ="19",
(525,200) *+{\color{red}{b_1}} ="20",
(675,250) *+{\color{red}{a_1}} ="21",
(525,297) *+{\color{red}{d_1}} ="22",
(575,250) *+{\color{red}{c_1}} ="23",
(300,375) *+{\color{red}{g_1^1}} ="24",
(400,450) *+{\color{red}{g_0}} ="25",
(150,450) *+{\color{red}{g_3^1}} ="26",
(175,100) *+{\color{blue}{f_2'}} ="27",
(475,300) *+{\color{blue}{f_5'}} ="28",
(235,315) *+{\color{blue}{f_3'}} ="29",
(150,375) *+{\color{blue}{{g_1^3}'}} ="30",
(150,325) *+{\color{blue}{{g_2^3}'}} ="31",
(150,250) *+{\color{blue}{e_{3}'}} ="32",
(475,175) *+{\color{blue}{e_1'}} ="33",
(190,333) *+{\color{blue}{{g_3^3}'}} ="34",
(265,350) *+{\color{blue}{e_1'}} ="35",
(450,335) *+{\color{blue}{{g_2^4}'}} ="36",
(150,275) *+{\color{blue}{d_3'}} ="37",
(184,175) *+{\color{blue}{a_3'}} ="38",
(225,0) *+{\color{blue}{b_3'} }="39",
(200,50) *+{\color{blue}{c_3'} }="40",
(525,350) *+{\color{blue}{a_1'}} ="41",
(475,125) *+{\color{blue}{d_1'}} ="42",
(475,25) *+{\color{blue}{b_1'} }="43",
(475,75) *+{\color{blue}{c_1'} }="44",
(75,325) *+{\color{blue}{{g_3^4}'} }="45",
(370,425) *+{\color{blue}{{g_2^2}'}} ="46",
(415,345) *+{\color{blue}{{g_1^2}'} }="47",
"2", {\ar"0"},
"0", {\ar"3"},
"5", {\ar"0"},
"0", {\ar"6"},
"0", {\ar"29"},
"0", {\ar"31"},
"0", {\ar"34"},
"0", {\ar"36"},
"0", {\ar"45"},
"0", {\ar"47"},
"1", {\ar"2"},
"3", {\ar"1"},
"7", {\ar"1"},
"1", {\ar"8"},
"6", {\ar"2"},
"2", {\ar"7"},
"2", {\ar"27"},
"2", {\ar|*+{\scriptstyle 2}"32"},
"2", {\ar|*+{\scriptstyle 2}"37"},
"2", {\ar|*+{\scriptstyle 2}"38"},
"2", {\ar|*+{\scriptstyle 2}"39"},
"2", {\ar|*+{\scriptstyle 2}"40"},
"3", {\ar"5"},
"8", {\ar"3"},
"3", {\ar"28"},
"3", {\ar|*+{\scriptstyle 2}"33"},
"3", {\ar|*+{\scriptstyle 2}"41"},
"3", {\ar|*+{\scriptstyle 2}"42"},
"3", {\ar|*+{\scriptstyle 2}"43"},
"3", {\ar|*+{\scriptstyle 2}"44"},
"24", {\ar"4"},
"4", {\ar"26"},
"4", {\ar"34"},
"4", {\ar"35"},
"9", {\ar"5"},
"5", {\ar"24"},
"29", {\ar"5"},
"34", {\ar"5"},
"6", {\ar"12"},
"14", {\ar"6"},
"27", {\ar"6"},
"32", {\ar"6"},
"7", {\ar"16"},
"18", {\ar"7"},
"8", {\ar"20"},
"22", {\ar"8"},
"28", {\ar"8"},
"33", {\ar"8"},
"9", {\ar"10"},
"24", {\ar"9"},
"25", {\ar"9"},
"47", {\ar"9"},
"10", {\ar"26"},
"46", {\ar"10"},
"24", {\ar"11"},
"11", {\ar"26"},
"30", {\ar"11"},
"31", {\ar"11"},
"13", {\ar"12"},
"12", {\ar"14"},
"12", {\ar"15"},
"38", {\ar"12"},
"13", {\ar"14"},
"15", {\ar|*+{\scriptstyle 2}"13"},
"39", {\ar"13"},
"14", {\ar"15"},
"37", {\ar"14"},
"40", {\ar"15"},
"17", {\ar"16"},
"16", {\ar"18"},
"16", {\ar"19"},
"17", {\ar"18"},
"19", {\ar|*+{\scriptstyle 2}"17"},
"18", {\ar"19"},
"21", {\ar"20"},
"20", {\ar"22"},
"20", {\ar"23"},
"42", {\ar"20"},
"21", {\ar"22"},
"23", {\ar|*+{\scriptstyle 2}"21"},
"43", {\ar"21"},
"22", {\ar"23"},
"41", {\ar"22"},
"44", {\ar"23"},
"26", {\ar"24"},
"35", {\ar"24"},
"26", {\ar"25"},
"36", {\ar"25"},
"45", {\ar"26"},
\end{xy}$$
By Lemma \ref{cycle} our next section of the maximal green sequence is a green sequence for this cycle. Performing this cycle will make the $e_1,  \ldots, e_n,$ and $g_1^2$ green. 
$$\begin{xy} 0;<.5pt,0pt>:<0pt,-.5pt>:: 
(150,150) *+{\color{red}{f_3}} ="0",
(250,150) *+{\color{red}{f_2} }="1",
(125,200) *+{\color{red}{f_1} }="2",
(425,225) *+{\color{red}{f_5}} ="3",
(102,321) *+{\color{darkspringgreen}{f_4}} ="4",
(200,226) *+{\color{darkspringgreen}{g_1^2}} ="5",
(75,150) *+{\color{darkspringgreen}{e_3}} ="6",
(200,125) *+{\color{darkspringgreen}{e_2}} ="7",
(325,150) *+{\color{darkspringgreen}{e_1}} ="8",
(250,322) *+{\color{red}{g_3^2}} ="9",
(175,350) *+{\color{red}{g_2^2}} ="10",
(0,325) *+{\color{red}{g_1^3}} ="11",
(100,100) *+{\color{red}{d_3}} ="12",
(50,25) *+{\color{red}{a_3}} ="13",
(0,100) *+{\color{red}{b_3}} ="14",
(50,75) *+{\color{red}{c_3}} ="15",
(250,75) *+{\color{red}{d_2}} ="16",
(200,0) *+{\color{red}{a_2} }="17",
(150,75) *+{\color{red}{b_2}} ="18",
(200,50) *+{\color{red}{c_2}} ="19",
(400,100) *+{\color{red}{d_1}} ="20",
(350,25) *+{\color{red}{a_1} }="21",
(300,100) *+{\color{red}{b_1}} ="22",
(350,75) *+{\color{red}{c_1}} ="23",
(196,281) *+{\color{red}{g_1^1}} ="24",
(251,377) *+{\color{red}{g_0}} ="25",
(252,426) *+{\color{red}{g_3^1}} ="26",
(0,425) *+{\color{blue}{{g_1^3}'}} ="27",
(350,425) *+{\color{blue}{{g_2^3}'}} ="28",
(150,250) *+{\color{blue}{{g_3^3}'}} ="29",
(100,403) *+{\color{blue}{{g_1^4}'}} ="30",
(350,325) *+{\color{blue}{{g_2^4}'}} ="31",
(350,375) *+{\color{blue}{{g_3^4}'}} ="32",
(175,425) *+{\color{blue}{{g_2^2}'}} ="33",
(350,275) *+{\color{blue}{{g_1^2}'}} ="34",
"1", {\ar"0"},
"0", {\ar"2"},
"6", {\ar"0"},
"0", {\ar"7"},
"3", {\ar"1"},
"7", {\ar"1"},
"1", {\ar"8"},
"2", {\ar"3"},
"5", {\ar"2"},
"2", {\ar"6"},
"3", {\ar"5"},
"8", {\ar"3"},
"28", {\ar"3"},
"29", {\ar"3"},
"31", {\ar"3"},
"32", {\ar"3"},
"34", {\ar"3"},
"24", {\ar"4"},
"4", {\ar"26"},
"4", {\ar"29"},
"4", {\ar"30"},
"9", {\ar"5"},
"5", {\ar"24"},
"5", {\ar"28"},
"5", {\ar"31"},
"5", {\ar"32"},
"5", {\ar"34"},
"6", {\ar"12"},
"14", {\ar"6"},
"7", {\ar"16"},
"18", {\ar"7"},
"8", {\ar"20"},
"22", {\ar"8"},
"9", {\ar"10"},
"24", {\ar"9"},
"25", {\ar"9"},
"34", {\ar"9"},
"10", {\ar"26"},
"33", {\ar"10"},
"24", {\ar"11"},
"11", {\ar"26"},
"27", {\ar"11"},
"28", {\ar"11"},
"13", {\ar"12"},
"12", {\ar"14"},
"12", {\ar"15"},
"13", {\ar"14"},
"15", {\ar|*+{\scriptstyle 2}"13"},
"14", {\ar"15"},
"17", {\ar"16"},
"16", {\ar"18"},
"16", {\ar"19"},
"17", {\ar"18"},
"19", {\ar|*+{\scriptstyle 2}"17"},
"18", {\ar"19"},
"21", {\ar"20"},
"20", {\ar"22"},
"20", {\ar"23"},
"21", {\ar"22"},
"23", {\ar|*+{\scriptstyle 2}"21"},
"22", {\ar"23"},
"26", {\ar"24"},
"30", {\ar"24"},
"26", {\ar"25"},
"31", {\ar"25"},
"32", {\ar"26"},
\end{xy}$$

After mutating at $f_{n+1}$ and $g_1^{p-4}$ we have a similar  situation as we did at the end of the proof of Lemma \ref{tail}. Note that in our running example of $Q^5_3$ that $g_1^{p-4}=g_1^1,$ and is not shown to be green in any picture. The remaining vertices that we mutate along in the $\beta$ subsequence form an equioriented affine subquiver. It is easy to follow that this is a green sequence that will make the $P_{p-3}$ subquiver of $Q_n^p$ red.
$$
\begin{xy} 0;<.5pt,0pt>:<0pt,-.45pt>:: 
(150,150) *+{\color{red}{f_3}} ="0",
(250,150) *+{\color{red}{f_2} }="1",
(125,200) *+{\color{red}{f_1} }="2",
(425,225) *+{\color{red}{f_5}} ="3",
(102,321) *+{\color{red}{f_4}} ="4",
(200,226) *+{\color{darkspringgreen}{g_1^2}} ="5",
(75,150) *+{\color{darkspringgreen}{e_3}} ="6",
(200,125) *+{\color{darkspringgreen}{e_2}} ="7",
(325,150) *+{\color{darkspringgreen}{e_1}} ="8",
(250,322) *+{\color{red}{g_3^2}} ="9",
(175,350) *+{\color{red}{g_2^2}} ="10",
(0,325) *+{\color{red}{g_1^1}} ="11",
(100,100) *+{\color{red}{d_3}} ="12",
(50,25) *+{\color{red}{a_3}} ="13",
(0,100) *+{\color{red}{b_3}} ="14",
(50,75) *+{\color{red}{c_3}} ="15",
(250,75) *+{\color{red}{d_2}} ="16",
(200,0) *+{\color{red}{a_2} }="17",
(150,75) *+{\color{red}{b_2}} ="18",
(200,50) *+{\color{red}{c_2}} ="19",
(400,100) *+{\color{red}{d_1}} ="20",
(350,25) *+{\color{red}{a_1} }="21",
(300,100) *+{\color{red}{b_1}} ="22",
(350,75) *+{\color{red}{c_1}} ="23",
(196,281) *+{\color{red}{g_1^1}} ="24",
(251,377) *+{\color{red}{g_0}} ="25",
(252,426) *+{\color{red}{g_3^1}} ="26",
(0,425) *+{\color{blue}{{g_1^3}'}} ="27",
(350,425) *+{\color{blue}{{g_2^3}'}} ="28",
(350,245) *+{\color{blue}{{g_3^3}'}} ="29",
(100,403) *+{\color{blue}{{g_1^4}'}} ="30",
(350,325) *+{\color{blue}{{g_2^4}'}} ="31",
(350,375) *+{\color{blue}{{g_3^4}'}} ="32",
(175,425) *+{\color{blue}{{g_2^2}'}} ="33",
(350,280) *+{\color{blue}{{g_1^2}'}} ="34",
"1", {\ar"0"},
"0", {\ar"2"},
"6", {\ar"0"},
"0", {\ar"7"},
"3", {\ar"1"},
"7", {\ar"1"},
"1", {\ar"8"},
"2", {\ar"3"},
"5", {\ar"2"},
"2", {\ar"6"},
"3", {\ar"5"},
"8", {\ar"3"},
"28", {\ar"3"},
"29", {\ar"3"},
"31", {\ar"3"},
"32", {\ar"3"},
"34", {\ar"3"},
"4", {\ar"9"},
"4", {\ar"11"},
"24", {\ar"4"},
"26", {\ar"4"},
"30", {\ar"4"},
"5", {\ar"11"},
"24", {\ar"5"},
"5", {\ar"28"},
"5", {\ar"29"},
"5", {\ar"31"},
"5", {\ar"32"},
"5", {\ar"34"},
"6", {\ar"12"},
"14", {\ar"6"},
"7", {\ar"16"},
"18", {\ar"7"},
"8", {\ar"20"},
"22", {\ar"8"},
"9", {\ar"10"},
"9", {\ar"24"},
"25", {\ar"9"},
"34", {\ar"9"},
"10", {\ar"26"},
"33", {\ar"10"},
"11", {\ar"24"},
"11", {\ar"26"},
"27", {\ar"11"},
"28", {\ar"11"},
"13", {\ar"12"},
"12", {\ar"14"},
"12", {\ar"15"},
"13", {\ar"14"},
"15", {\ar|*+{\scriptstyle 2}"13"},
"14", {\ar"15"},
"17", {\ar"16"},
"16", {\ar"18"},
"16", {\ar"19"},
"17", {\ar"18"},
"19", {\ar|*+{\scriptstyle 2}"17"},
"18", {\ar"19"},
"21", {\ar"20"},
"20", {\ar"22"},
"20", {\ar"23"},
"21", {\ar"22"},
"23", {\ar|*+{\scriptstyle 2}"21"},
"22", {\ar"23"},
"29", {\ar"24"},
"26", {\ar"25"},
"31", {\ar"25"},
"32", {\ar"26"},
\end{xy}
$$

 Finally, the only remaining green vertices are $e_i$ for $i=1,\ldots,n$. By inspection of the local configuration of the frozen vertices we see that this is the exact same configuration as in the twice punctured case. Therefore it follows from the proof of Theorem \ref{2punc}. That $\tau_i$ is a green sequence for our quiver, and concluding the proof that our green sequence is maximal. 
\end{proof}

\section{Future Interests}

In light of Theorem \ref{main2} we know of particular triangulations of surfaces that have maximal green sequences. Muller in \cite{muller} gives an example of a quiver with a maximal green sequence and a mutation equivalent quiver which does not have a maximal green sequence. The question of whether every triangulation of a surface has a maximal green sequence still remains open in many cases. It is worthwhile to point out that the example given in \cite{muller} is not one of a quiver that is associated to a surface. It is the belief of the authors that when dealing with cluster algebras that arise from surfaces that the existence of a maximal green sequence may in fact be a mutation invariant. 
\begin{conj}
 Let $Q$ be a quiver associated to a surface. If $Q$ exhibits a maximal green sequence, then any $Q'$ mutation equivalent to $Q$ also exhibits a maximal green sequence. 
\end{conj}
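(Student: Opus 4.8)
The plan is to reduce the conjecture to a purely local statement about flips of tagged triangulations and then to graft onto it the one mutation-invariance result already available in the literature. By the work of Fomin, Shapiro, and Thurston \cite{fomin}, two quivers arising from the same surface are mutation equivalent precisely when their tagged triangulations are connected by a sequence of flips, each flip corresponding to a single mutation of the associated quiver. It therefore suffices to establish the local claim: if a tagged triangulation $T$ has a quiver $Q_T$ admitting a maximal green sequence, then the quiver $Q_{T'}$ obtained by flipping a single arc of $T$ also admits one. An induction on the length of a flip sequence connecting $Q$ to a mutation-equivalent $Q'$ then yields the conjecture, so the whole problem collapses to a single flip.

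First I would invoke Muller's theorem \cite{muller} that the existence of a \emph{reddening} sequence (a sequence of mutations after which every vertex is red, dropping the intermediate greenness requirement) is invariant under mutation. Since every maximal green sequence is in particular a reddening sequence, $Q_{T'}$ automatically admits a reddening sequence for free. The entire content of the conjecture is thus concentrated in \emph{upgrading} a reddening sequence for a surface quiver to a genuinely green one. My approach would be to pass to the scattering-diagram / $c$-vector picture: a maximal green sequence corresponds to a finite generic path travelling monotonically from the all-green chamber to the all-red chamber, whereas a general reddening sequence may leave and re-enter the positive region. I would try to show that for surface quivers the walls of the scattering diagram are governed by the arcs of the surface, and that a single flip acts on these walls in a controlled, local way that preserves the existence of such a monotone path.

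The key steps, in order, would be: (i) set up the dictionary between flips, mutations, and the induced action on $c$-vectors using \cite{fomin} together with the framing conventions of Section 2; (ii) apply \cite{muller} to obtain a reddening sequence for $Q_{T'}$ at no cost; (iii) give a surface-theoretic criterion for when a reddening sequence is in fact green, phrased in terms of laminations or of the associated torsion classes; and (iv) prove that a single flip transports a green reddening sequence to another green reddening sequence, after an explicit local modification of the sequence near the flipped arc — precisely the kind of "cut and splice" of green sequences that drives the construction for $Q_n^p$ earlier in this paper.

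The hard part will be step (iv), and this is exactly where Muller's counterexample \cite{muller} shows that the naive strategy fails for general quivers: conjugating a green sequence by $\mu_k$ need not produce a green sequence, because greenness is not preserved vertex by vertex under mutation. Hence the surface hypothesis must be used in an essential way, presumably through the finiteness and planarity constraints it places on the scattering diagram (finite mutation type, block decompositions of the quiver, or the structure of the tagged arc complex). Without such extra input the invariance is simply false, so the crux is to isolate the feature of surface quivers that rules out the pathology. I expect a complete proof would either require a classification-style argument handling the finitely many local flip configurations, or a conceptual input from stability theory guaranteeing that the stability region of a surface quiver is always "green-connected."
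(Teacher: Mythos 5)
The first thing to realize is that the paper does not prove this statement at all: it appears in the final section as an open conjecture, explicitly offered as a ``belief of the authors,'' so there is no proof in the paper to compare yours against. Your proposal must therefore stand on its own, and as written it is not a proof but a research program whose essential step is left open. The two moves you actually carry out are sound: the reduction, via the flip/mutation dictionary of \cite{fomin} and induction on the length of a flip sequence, to the case of a single flip; and the appeal to \cite{muller} for the fact that existence of a reddening (green-to-red) sequence \emph{is} mutation invariant. But these moves only relocate the problem. After them, what remains is precisely your steps (iii) and (iv): showing that for surface quivers a reddening sequence can be upgraded to a genuinely green one, equivalently that a single flip transports a maximal green sequence to a maximal green sequence. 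You prove neither; you explicitly defer the crux and list possible strategies. Since Muller's counterexample shows exactly this upgrade fails for general quivers, the entire content of the conjecture is concentrated in the step you skip, so there is a genuine gap rather than a complete argument by a different route.

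Two further cautions about the plan itself. First, the ``cut and splice'' analogy with the construction for $Q_n^p$ in this paper does not transfer: that construction works because each subsequence is a maximal green sequence for a full subquiver all of whose vertices are still green at the moment it is applied (the observation at the end of Section 2), and the quiver was engineered to admit such a decomposition; after an arbitrary flip you have no control guaranteeing anything similar. Second, step (i) needs care for once-punctured closed surfaces, where tagged triangulations in the same mutation class need not be connected by flips; by \cite{ladkani} those quivers admit no maximal green sequence, so the conjecture is consistent there, but your dictionary between mutation equivalence and flip sequences is subtler than stated. For context: the conjecture was later resolved affirmatively by a classification-style argument showing that every quiver of finite mutation type, outside a short explicit list including once-punctured closed surfaces, admits a maximal green sequence --- essentially the first of the two routes you anticipate at the end --- but nothing in your proposal carries that argument out.
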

 
The question of existence of maximal green sequences for quivers that are not associated to surfaces is still largely open.




\begin{thebibliography}{}

\bibitem{alim} M. Alim, S. Cecotti, C. Cordova, S. Espahbodi, A. Rastogi, C. Vafa, \textit{BPS Quivers and Spectra of Complete $N=2$ Quantum Field Theories},Communications in Mathematical Physics. \textbf{323}, (2013) 1185-1127

\bibitem{amoit} Amoit, C., \textit{Cluster categories for algebras of global dimension 2 and quivers with potential}, Annales de l'institut Fourier \textbf{59} (2009), no. 6, 2525-2590.

\bibitem{brustle} Brustle, T., Dupont, G., Perotin, M., \textit{On Maximal Green Sequences}, \textit{International Mathematics Research Notices}. \textbf{16} (2014), 4547-4586

\bibitem{bucher} Bucher, E., \textit{Maximal green Sequences for cluster algebras associated to the n-torus} arXiv:1412.3713

\bibitem{fock} Fock, V., Goncharov, A., \textit{Moduli Spaces of Local Systems and Higher Teichmuller Theory}. Publ. Math. Inst. Hautes Etudes Sci. \textbf{103} (2006), 1-211

\bibitem{fomin} Fomin, S., Shapiro, M., and Thurston, D., \textit{Cluster Algebras and Triangulated Surfaces Part I: Cluster Complexes}, \textit{Acta Math}. \textbf{201} (2008), 83-146

\bibitem{cluster1} Fomin, S., Zelevinsky, A., \textit{Cluster Algebras I: Foundations}, Journal of American Mathematical Society. \textbf{15} (2002), no. 2, 497-529 (electronic). MR 1887642 (2003f: 16050)


\bibitem{gekhtman} Gekhtman, M., Shapiro, M., Vainshtein, A., \textit{Cluster Algebras and Weil-Peterson Forms}, Duke Mathematical Journal \textbf{127} (2005), 291-311


\bibitem{keller2} Keller, B., \textit{On Cluster Theory and Quantum Dilogarithm Identities, Representation Theory of Algebras and Related Topics} (Skowronski, A., Yamagata, K., eds.), European Mathematical Society, (2011), 85-116

\bibitem{keller} Keller, B., \textit{Quiver Mutation and Combinatorial DT-Invariants}, corrected version of a contribution to DMTCS Proceedings: FPSAC 2013 (2013)

\bibitem{kontsevich} Kontsevich, M., Soibelman, Y., \textit{Stability Structures, Motivic Donaldson-Thomas Invariants and Cluster Transformations}, arXiv:0811.2435

\bibitem{ladkani} Ladkani, S., On cluster algebras from once punctured closed surfaces, preprint, arXiv:1310.4454.

\bibitem{muller} Muller, G., \textit{The existence of maximal green sequences is not invariant under mutation}, arxiv:1503.04675 

\bibitem{williams} Williams, L., \textit{Cluster Algebras: An Introduction}, Bulletin of the Ameican Mathematical Society. \textbf{51} (2014), 1-26

\bibitem{yakimov} Yakimov, M., \textit{Maximal Green Sequences for Double Bruhat Cells.} preprint (2014)
\end{thebibliography}


\end{document}